\newtheorem{theorem}{Theorem}
\newtheorem{prop}{Proposition}
\newtheorem{lemma}{Lemma}
\newtheorem{coro}{Corollary}
\newtheorem{fact}{Fact}
\theoremstyle{definition}
\newtheorem{defin}{Definition}
\newtheorem{remark}{Remark}
\newtheorem{example}{Example}
\newcommand{\ts}{\hspace{0.5pt}}
\newcommand{\CC}{\mathbb{C}\ts}
\newcommand{\RR}{\mathbb{R}\ts}
\newcommand{\QQ}{\mathbb{Q}\ts}
\newcommand{\ZZ}{\mathbb{Z}}
\newcommand{\TT}{\mathbb{T}}
\newcommand{\NN}{\mathbb{N}}
\newcommand{\one}{\mathbbm{1}}
\newcommand{\sph}{\mathbb{S}}
\newcommand{\up}{\widetilde{\!\!\!\hphantom{m}}}
\newcommand{\dd}{{\rm d}}
\newcommand{\exend}{\hfill$\Diamond$}
\DeclareMathOperator{\lcm}{lcm\ts}
\DeclareMathOperator{\mgcd}{mgcd}
\DeclareMathOperator{\ord}{ord}
\DeclareMathOperator{\sgn}{sgn}
\DeclareMathOperator{\trace}{tr}
\DeclareMathOperator{\card}{card\ts}
\DeclareMathOperator{\GL}{GL}
\DeclareMathOperator{\PGL}{PGL}
\DeclareMathOperator{\SL}{SL}
\DeclareMathOperator{\fix}{Fix}
\DeclareMathOperator{\per}{per}
\DeclareMathOperator{\Mat}{Mat}
\begin{document}

\title[Periodic orbits of toral endomorphisms]
{Periodic orbits of linear endomorphisms \\[2mm]
on the $2$-torus and its lattices}

\author{Michael Baake}
\address{Fakult\"at f\"ur Mathematik, Universit\"at Bielefeld, \newline
\hspace*{\parindent}Postfach 100131, 33501 Bielefeld, Germany}
\email{mbaake@math.uni-bielefeld.de}
\urladdr{http://www.math.uni-bielefeld.de/baake}

\author{John A.~G.~Roberts}
\address{School of Mathematics and Statistics, 
University of New South Wales, \newline
\hspace*{\parindent}Sydney, NSW 2052, Australia}
\email{jag.roberts@unsw.edu.au}
\urladdr{http://www.maths.unsw.edu.au/\~{}jagr}

\author{Alfred Weiss}
\address{Department of Mathematical and Statistical Sciences,
University of Alberta, \newline
\hspace*{\parindent}Edmonton, AB, Canada T6G 2G1}
\email{aweiss@math.ualberta.ca}
\urladdr{http://www.math.ualberta.ca/Weiss\_A.html}

\begin{abstract} 
  Counting periodic orbits of endomorphisms on the $2$-torus is
  considered, with special focus on the relation between global and
  local aspects and between the dynamical zeta function on the torus
  and its analogue on finite lattices.  The situation on the lattices,
  up to local conjugacy, is completely determined by the determinant,
  the trace and a third invariant of the matrix defining the
  toral endomorphism.
\end{abstract}

\maketitle

\section{Introduction}

The iteration of a continuous mapping $T$ of a compact space
$\varOmega$ into itself provides an example of a $\ZZ$-action on
$\varOmega$ and an important discrete dynamical system, usually
written as $(\varOmega,T)$. When $\varOmega$ is a metric space, the
system $(\varOmega,T)$ is called \emph{chaotic} when the periodic
orbits of $T$ are dense in $\varOmega$ and when also a dense orbit
exists, see \cite{BBCDS} for details. In general, significant
information about $T$ is contained in the periodic orbits of $T$ and
in their distribution over $\varOmega$. Knowledge of the periodic
orbits can be used to detect characteristic properties of $T$. For
example, if $T'$ represents another continuous mapping of $\varOmega$,
then a necessary condition for $T$ and $T'$ to be topologically
conjugate is that they share the same number of periodic points of
each period (presuming these numbers are finite).

It is the aim of this paper to contribute to the structure of periodic
orbits and related issues of conjugacy for the case of endomorphisms
of the $2$-torus, represented as usual by the action (mod $1$) of an
integer matrix $M\in\Mat (2,\ZZ)$ on $\TT^2 \simeq \RR^2/\ZZ^2$. A
well-studied subclass consists of the toral automorphisms, represented
by elements of the group $\GL (2,\ZZ)$, being the subgroup of matrices
with determinant $\pm 1$ within the ring $\Mat (2,\ZZ)$. Particularly
important are the hyperbolic ones (meaning that no eigenvalue is on
the unit circle), which are often called \emph{cat maps}.  Since these
are expansive, all periodic point counts are finite
\cite[Thm.~5.26]{W}.  Hyperbolic toral automorphisms are also
topologically mixing and intrinsically ergodic, see \cite{KH,W}. By the
Bowen-Sinai theorem, compare \cite[Thm.~2.2]{DEI}, this has the
consequence that the integral of a continuous function over $\TT^2$
equals its average value over the points fixed by $M^m$ in the limit
as $m \rightarrow \infty$.

The topological entropy of a hyperbolic toral automorphism $M\in \GL
(2,\ZZ)$ is given by $\log\,\lvert \lambda_{\mathrm{max}}\rvert$,
where $\lambda_{\mathrm{max}}$ is the eigenvalue of $M$ with modulus
$>1$.  This is also the metric (or Kolmogorov-Sinai) entropy of $M$,
and completely determines the dynamics up to metric isomorphism,
compare \cite{AW}.  This does not imply topological conjugacy though,
and one important difference emerges from the periodic orbits, which
live on a set of measure $0$. Indeed, on $\TT^2$, it is well-known
that the periodic orbits of hyperbolic linear endomorphisms lie on the
invariant lattices given by the sets of rational points with a given
denominator $n$, also known as $n$-division points (see
Section~\ref{torus-lattices} for more).  One of our main themes in
this paper is the interplay between the periodic orbit statistics on a
certain lattice (which we call \emph{local statistics}) versus
periodic orbit statistics on the union of all lattices (which we call
\emph{global statistics}). What determines when two cat maps have the
same global statistics? What determines when two cat maps have the
same local statistics on a certain lattice or on all lattices?

At the outset, it is worth saying that there have been many
investigations by others into classifying the periodic orbits of cat
maps, spread over a diverse range of the mathematics and physics
literature, compare \cite{HB,PV,K,DF,G,BF}\footnote{These
  investigations have used a variety of techniques. One is tempted to
  say, corrupting a proverb used by Mark Twain and others
  (\texttt{http://www.worldwidewords.org/qa/qa-mor1.htm}):
  \emph{There's more than one way to skin a cat (map).}} and further
references given there.  One motivation for this has come from the
interest in spatial discretisations of dynamical systems, itself
motivated by computer (screen) realisations of continuous phase
spaces. The time of recurrence of a hyperbolic $M \in \GL (2,\ZZ)$ on
the toral rational lattice with denominator $n$ is denoted by $\per\ts
(M,n)$, where this is the least common multiple of the periods present
on the $n$-division points.  The dependence of $\per\ts (M,n)$ on $n$
and related lower and upper bounds have been addressed in \cite{DF,BF,
  KR2,Ku,FND,S}.  In particular, the surprisingly low value of
$\per\ts (M,n)$ for some high values of $n$ (which correspond to a
very fine rational discretisation of the torus) has been investigated
in \cite{DF,BF} where it is shown that $\per\ts (M,n) \le 3n$ (see
\cite{S} for refinements of this bound).

A strong motivation for studying cats maps on lattices comes from
quantum mechanics, compare \cite{HB,K,KM,KR,KR2,Ku,FND,DW}. As
described in these references, quantum cat maps and their
perturbations are built from (classical) cat maps and their
perturbations \emph{restricted to a particular rational lattice}
(called the Wigner lattice in this instance). For this reason,
properties of a cat map that manifest themselves only on some rational
lattices, but not on others, can induce properties of the
corresponding quantum cat map on some Wigner lattices, but again not
on others.  Important cases of this occur for symmetries or (time)
reversing symmetries of a cat map, these being automorphisms of the
torus that commute with the cat map, respectively conjugate it into
its inverse.  By way of illustration, it was shown in \cite{BRcat}
that the first hyperbolic toral automorphism $A\in\SL (2,\ZZ)$ which
is not conjugate to its inverse in $\GL (2,\ZZ)$ (which actually also
excludes topological conjugacy to its inverse, compare \cite{AP})
occurs for trace $20$.  However, the global absence of time-reversal
symmetry did not affect the statistics of the eigenvalues of the
quantum cat map built from this example \cite{KM}.  As explained
there, this phenomenon is due to the fact that the quantum cat map
retains (time) reversing symmetry because $A$ is conjugate to its
inverse mod $n$ for any $n$ and that the quantum cat map is
constructed from the reduction of $A$ mod $n$.  Significantly, the
conjugating matrix on each lattice depends on $n$, consistent with
there being no global reversor. Recently, there has been quite some
interest in dealing with this challenge of so-called pseudo-symmetries
of quantum maps that are not quantisations of symmetries of the cat
map on the torus, but instead are manifestations of local symmetries
of the cat map restricted to some lattice \cite{KM,KR,DW}.

When trying to sort cat maps by their global or local periodic orbit
statistics, conjugacy is highly relevant. Conjugacy of $\GL (2,\ZZ)$
matrices is another topic that has arisen in a broad variety of
contexts and has been considered by many (see \cite{T,Rade,ATW} and
references therein). Conjugacy is determined by a triple of
invariants, namely the determinant, the trace and one other invariant
which can be related to ideal classes, representation by binary
quadratic forms or topological properties \cite{ATW}. Conjugacy in
$\GL (2,\ZZ)$ can also be completely decided by using the amalgamated
free product structure of $\PGL (2,\ZZ)$, which attaches a finite
sequence of integers to each element which corresponds to its normal
form as a word in the generators of the amalgamated free product
\cite{BRcat}. Clearly, conjugate cat maps share both the same global
period statistics and the same local statistics on each rational
lattice (where the dynamics is conjugate via the localisation mod $n$
of the global conjugating matrix). Also, cat maps that are just
conjugate on a given rational lattice will share the same local
statistics on that lattice. Being able to decide global and local
conjugacies is thus clearly important, as the statistics is the same
for all elements of a conjugacy class. But if two cat maps share the
same local statistics on a given rational lattice, are they linearly
conjugate on that lattice and what determines this?

The results of this paper will go some way towards answering the
questions raised above. After recalling some well-known facts in
Section~\ref{sec-two}, we look at periodic orbit counts for integer
matrices in terms of zeta functions in Section~\ref{sec-three}. The
dynamical zeta function for the global counts is described by
Proposition~\ref{zeta1}, generalising a result of \cite{DEI}. We then
discuss a zeta function for the local periodic counts derived from the
action of an integer matrix on a rational lattice. Theorem~\ref{limit}
relates the global and local zeta functions in a suitable limit. This
is followed by an interpretation in terms of finite Abelian groups.
Section~\ref{sec-four} addresses the issue of local conjugacies of
linear endomorphisms on rational lattices. The \emph{matrix gcd},
which we define in Section~\ref{sec:mgcd}, turns out to be a key
quantity. It is preserved by $\GL (2,\ZZ)$ conjugacy, so it provides a
quick tool to see that two $\GL (2,\ZZ)$ matrices with different
matrix gcd are not conjugate on the torus.  On the other hand,
Theorem~\ref{main-theorem} and Corollary~\ref{main-coro} show that two
integer matrices that share the same determinant, trace and matrix gcd
are linearly conjugate on all rational lattices of the torus. As an
illustration of this result, consider our discussion above of quantum
cat maps and time-reversal symmetry. The fact that any $M \in \SL
(2,\ZZ)$ shares determinant, trace and matrix gcd with $M^{-1}$ means
that the two matrices are conjugate on \emph{all} rational lattices,
though not necessarily by matrices that derive from one and the same
matrix on the torus. This is nevertheless sufficient to guarantee that
the associated quantum cat map has time reversal symmetry.

\section{Notation and general setting}\label{sec-two}

Here, we describe our setting and recall some well-known facts,
tailored to the situation at hand. While we go along, we also
introduce our notation and establish further connections with
related topics in the recent literature.

\subsection{Counting orbits}

Consider a compact space $\varOmega$ and some (continuous) mapping $T$
of $\varOmega$ into itself. Let $\fix_m (T) := \{ x\in X \mid T^m x =
x\}$ be the set of fixed points of $T^m$.  Of particular interest are
the \emph{fixed point counts}, defined as
\begin{equation} \label{def-a}
     a_m \, := \, \card \{x\in \varOmega \mid T^m x=x\}
     \, = \,  \card (\fix_m (T))\ts ,
\end{equation}
which need not be finite in general. In many interesting cases,
including all expansive homeomorphisms, this is the case
though, including the toral endomorphisms without eigenvalues on the
unit circle. 

The quantity $a_m$ has the disadvantage that one keeps recounting the
contributions $a^{}_{\ell}$ for all $\ell | m$. Clearly, the fixed
points of \emph{genuine} order $m$ permit a partition into disjoint
cycles, each of length $m$. If $c_m$ is the number of such cycles, one
thus has the relation
\begin{equation} \label{a-from-c}
   a_m \, = \, \sum_{d\ts | m} d\, c_d \ts .
\end{equation}
An application of a standard inclusion-exclusion argument, here 
by means of the M\"obius inversion formula from elementary
number theory, results in the converse identity,
\begin{equation} \label{c-from-a}
    c_m \, = \, \frac{1}{m} \sum_{d\ts |  m} 
   \mu \big( \tfrac{m}{d}\big) \ts  a_d\ts ,
\end{equation}
where $\mu(k)$ is the M\"obius function, compare \cite{Hasse,PW} and
references therein for details.

\begin{remark}\label{exact}
  Recall from \cite{PW} that a sequence $(a_m)_{m\in\NN}$ of
  non-negative integers is termed \emph{exactly realised} when it is
  the sequence of fixed point counts of a (continuous) dynamical
  system. This happens if and only if the derived sequence
  $(c_m)_{m\in\NN}$ is a sequence of non-negative integers
  \cite[Lemma~2.1]{PW}; see \cite{ward-online} for interesting
  examples other than toral endomorphisms and \cite{NN} for recent
  extensions of the concept.  \exend
\end{remark}

{}For later use, we briefly summarise some properties of the fixed point
and orbit counts. Let $(a_m)_{m\in\NN}$ and $(c_m)_{m\in\NN}$ be a
matching pair of such sequences, hence related as in
Eqs.~\eqref{a-from-c} and \eqref{c-from-a}. The sequence of fixed
point counts is called \emph{periodic}, when an $n\in\NN$ exists so
that $a_{m+n} = a_{m}$ holds for all $m\in\NN$. The least $n$ with this
property is called the \emph{period} of the sequence
$(a_m)_{m\in\NN}$.  The following consequences are standard.
\begin{fact} \label{fact-one} 
  Let the non-negative integer sequences $(a_m)_{m\in\NN}$ and
  $(c_m)_{m\in\NN}$ satisfy Eq.~$\eqref{a-from-c}$. If $a_m$ is
  periodic with period $n\in\NN$, one has $c_m=0$ for all $m>n$.
  Conversely, if only finitely many orbit counts $c_m$ differ from
  $0$, $a_m$ is periodic, with period $n=\lcm \{m\in\NN\mid c_m\neq
  0\}$.  \qed
\end{fact}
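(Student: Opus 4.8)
The plan is to handle both directions by replacing the abstract sequences with a transparent combinatorial model and then running a short inclusion--exclusion argument. For each $d\in\NN$ fix pairwise disjoint finite sets $X_d$ with $\card X_d=d\,c_d$, and put $\Phi_m:=\bigcup_{d\mid m}X_d$. Then $\card\Phi_m=\sum_{d\mid m}d\,c_d=a_m$ by \eqref{a-from-c}, while $\Phi_\ell\cap\Phi_m=\Phi_{\gcd(\ell,m)}$, $\Phi_\ell\cup\Phi_m\subseteq\Phi_{\lcm(\ell,m)}$, and $X_\ell\subseteq\Phi_\ell$ with $X_\ell\cap\Phi_d=\varnothing$ whenever $\ell\nmid d$. (This is just the statement that the pair $(a_m),(c_m)$ is realised by a disjoint union of $c_d$ cyclic blocks of each length $d$, in the spirit of Remark~\ref{exact}; no topology enters.)

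For the first assertion I would argue by contradiction. Suppose $(a_m)$ has period $n$ but $c_\ell\neq0$ for some $\ell$ with $\ell\nmid n$ --- which in particular covers every $\ell>n$. Set $d:=\gcd(\ell,n)$, so $d\mid\ell$ and $d\le n<\ell$. Since $X_\ell$ is a non-empty subset of $\Phi_\ell\setminus\Phi_d$, one gets $a_\ell=\card\Phi_\ell>\card\Phi_d=a_d$. On the other hand, inclusion--exclusion yields
\[
 a_\ell+a_n-a_d \;=\; \card(\Phi_\ell\cup\Phi_n) \;\le\; \card\Phi_{\lcm(\ell,n)} \;=\; a_n,
\]
the last equality because $n\mid\lcm(\ell,n)$ and $(a_m)$ has period $n$; hence $a_\ell\le a_d$, a contradiction. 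Therefore $c_\ell=0$ for every $\ell\nmid n$.

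For the converse, with $S:=\{m\in\NN:c_m\neq0\}$ finite and $n:=\lcm S$, I would first check that $n$ is a period: every $d\in S$ divides $n$, so $d\mid m\iff d\mid(m+n)$, whence $a_{m+n}=\sum_{d\in S,\,d\mid(m+n)}d\,c_d=\sum_{d\in S,\,d\mid m}d\,c_d=a_m$ for all $m$. Thus the minimal period $n_0$ exists and divides $n$. Feeding $n_0$ into the first assertion gives $c_d=0$ for all $d\nmid n_0$, i.e.\ every $d\in S$ divides $n_0$, so $n=\lcm S$ divides $n_0$; with $n_0\mid n$ this forces $n_0=n$.

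The only point requiring care is pinning down the period \emph{exactly}: the bare statement ``$c_m=0$ for $m>n$'' bounds $\max S$ but would leave open the possibility that the true period is a proper divisor of $\lcm S$. It is essential that the inclusion--exclusion argument actually delivers the sharper conclusion ``$c_m=0$ unless $m\mid n$'', which is why it is phrased via $\gcd(\ell,n)$ rather than by comparing $\ell$ with $n$ directly. Granting that refinement, the remainder is routine divisor bookkeeping.
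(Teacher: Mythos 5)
Your proof is correct. The paper itself offers no argument (it labels the Fact as ``standard'' with an immediate \qed), so there is no competing route to compare against. The combinatorial model --- realising $(a_m),(c_m)$ by disjoint blocks $X_d$ of size $d\,c_d$ and setting $\Phi_m=\bigcup_{d\mid m}X_d$, which is just a disjoint union of $c_d$ cycles of each length $d$ --- is the natural one, and the inclusion--exclusion step is exactly what turns periodicity of $(a_m)$ into the vanishing of the $c_\ell$. Your closing observation is well placed: the stated conclusion ``$c_m=0$ for $m>n$'' would, in the converse direction, only bound $\max S$ by the minimal period $n_0$ and not force $\lcm S\mid n_0$; the sharper conclusion ``$c_\ell=0$ unless $\ell\mid n$'' that your argument actually delivers is genuinely needed to pin down $n_0=\lcm S$.

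One trivial slip in the first direction: you write ``$d\mid\ell$ and $d\le n<\ell$''. The inequality $n<\ell$ need not hold in the regime you are really after, namely $\ell\nmid n$ with $\ell\le n$ (e.g.\ $\ell=4$, $n=6$). What the argument actually uses is only $d<\ell$, which does hold because $d=\gcd(\ell,n)$ is a proper divisor of $\ell$ (if $d=\ell$ then $\ell\mid n$, contradicting $\ell\nmid n$); that is what guarantees $X_\ell\cap\Phi_d=\varnothing$ and hence $a_\ell>a_d$. The rest of the argument --- $a_\ell+a_n-a_d=\card(\Phi_\ell\cup\Phi_n)\le\card\Phi_{\lcm(\ell,n)}=a_n$, using that $\lcm(\ell,n)$ is a multiple of the period $n$ --- is sound, as is the second direction, including the step that the minimal period divides any period (closure of the set of periods under $\gcd$).
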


In extension of the usual practice for automorphisms, compare
\cite{KH}, we call a toral endomorphism $M\in \Mat(2,\ZZ)$
\emph{hyperbolic} when it has no eigenvalue on the unit circle
$\sph^1$.  Recall that the standard $2$-torus is $\TT^2 \simeq \RR^2/
\ZZ^2$, where $\ZZ^2$ is the square lattice in the plane. It is a
compact Abelian group, which can be written as $\TT^2 := [0,1)^2$,
with addition defined mod $1$.
\begin{fact} \label{fact:rational}
    If $M\in\Mat(2,\ZZ)$ is hyperbolic, $\fix_{m} (M) \subset (\TT^2
  \!\ts\cap \QQ^2)$ holds for all $m\in\NN$.
\end{fact}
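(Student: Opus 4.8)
The plan is to show that a point fixed by some power of a hyperbolic $M$ must have rational coordinates, by working with representatives in $\RR^2$ and exploiting that $M^m - \mathbbm{1}$ is invertible. Fix $m\in\NN$ and let $x\in\fix_m(M)$; choose a representative $v\in\RR^2$ of $x$. The condition $M^m x = x$ in $\TT^2$ means that $M^m v - v \in \ZZ^2$, i.e. $(M^m - \mathbbm{1})\,v = w$ for some $w\in\ZZ^2$. Since $M$ is hyperbolic, no eigenvalue of $M$ lies on $\sph^1$; in particular $1$ is not an eigenvalue of $M^m$ (an eigenvalue of $M^m$ is the $m$-th power of an eigenvalue of $M$, and $|\lambda^m| = 1$ would force $|\lambda| = 1$). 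Hence $\det(M^m - \mathbbm{1}) \neq 0$, so $M^m - \mathbbm{1}$ is an invertible element of $\Mat(2,\QQ)$.

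It then follows that $v = (M^m - \mathbbm{1})^{-1} w$. Since $M^m - \mathbbm{1}$ has integer entries and nonzero determinant, its inverse has rational entries (by Cramer's rule, the inverse is $\frac{1}{\det(M^m-\mathbbm{1})}\operatorname{adj}(M^m-\mathbbm{1})$ with integer adjugate). Therefore $v\in\QQ^2$, and consequently $x\in\TT^2\cap\QQ^2$. As $m\in\NN$ and $x\in\fix_m(M)$ were arbitrary, this gives $\fix_m(M)\subset(\TT^2\cap\QQ^2)$ for all $m\in\NN$.

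I do not expect any serious obstacle here; the only point requiring a moment's care is the reduction from "$1$ is not an eigenvalue of $M$" (which is not assumed — $M$ could have, say, eigenvalue $2$) to "$1$ is not an eigenvalue of $M^m$", which is exactly where hyperbolicity (no eigenvalue on $\sph^1$, rather than just $1\notin\operatorname{spec}(M)$) is used: if $\lambda^m = 1$ then $\lambda$ is a root of unity and lies on the unit circle, contradicting hyperbolicity. One may additionally remark that the same argument shows $\card(\fix_m(M)) \le \lvert\det(M^m-\mathbbm{1})\rvert$, since the solutions $v$ of $(M^m-\mathbbm{1})v\in\ZZ^2$ modulo $\ZZ^2$ are in bijection with $\ZZ^2/(M^m-\mathbbm{1})\ZZ^2$, whose order is $\lvert\det(M^m-\mathbbm{1})\rvert$; this makes the finiteness of the $a_m$ explicit, though it is not needed for the stated claim.
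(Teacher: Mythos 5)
Your proof is correct and follows essentially the same argument as the paper's (and as the one in Katok--Hasselblatt that the paper cites): use $(M^m-\one)x=v$ with $v\in\ZZ^2$, observe that hyperbolicity ensures $1$ is not an eigenvalue of $M^m$, and solve using the rational inverse of $M^m-\one$. You spell out the step from hyperbolicity to $1\notin\operatorname{spec}(M^m)$ somewhat more explicitly than the paper, but the route is the same.
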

\begin{proof}
  This can be shown by the concrete argument used in \cite[Sec.~1.8,
  p.~44]{KH}.  In essence, for any $m\in\NN$, the equation $M^{m} x =
  x$ mod $1$ means $(M^{m}-\one)x = v$ for some integer vector $v$.
  Since $1$ is not an eigenvalue of $M^{m}$, the integer matrix
  $M^{m}-\one$ is invertible, with rational inverse.  Solving for $x$
  then gives the claim.
\end{proof}

This property motivates to look at periodic points of toral endomorphisms
on certain subsets of $\TT^2\cap\QQ^2$ as well.      

\subsection{Invariant lattices on the $2$-torus}\label{torus-lattices}
Recall that a \emph{lattice} in a locally compact Abelian group is a
co-compact discrete subgroup, such as $\ZZ^2$ in $\RR^2$. For $\TT^2$,
a lattice simply means a discrete subgroup of it, which is then a
\emph{finite} Abelian group. The relevant lattices on $\TT^2$ are
\begin{equation} \label{define-L}
   L_n \, := \, \left\{ \big( \tfrac{k}{n},\tfrac{\ell}{n}\big)
   \mid 0 \le k,\ell < n \right\}\ts , \quad \text{for } n\in\NN\ts ,
\end{equation}
also known as $n$-division points, because they are invariant under
toral endomorphisms.

Consider an integer matrix $M$ that acts on $\TT^2$ (meaning that it
acts via matrix multiplication, evaluated mod $1$). Clearly, one then
has $M L_n \subset L_n$, and interesting information on the orbit
structure of the toral endomorphism or automorphism ($\det (M)=\pm 1$)
is contained in the distribution of its orbits on $L_n$.
Alternatively, one can characterise $L_n$ via
\begin{equation} \label{char-L}
   L_n \, = \, \{ x\in\TT^2\mid nx=0 \; \mbox{\rm (mod $1$)}\}\ts .
\end{equation}

\begin{remark} \label{remark-one} 
  When looking at the action of $M$ on $L_n$ numerically, it is
  usually easier to replace $L_n$ by $\tilde{L}_n := \{ (k,\ell)\mid 0
  \le k,\ell < n \}$, with the equivalent action of $M$ defined mod
  $n$.  This also applies to various theoretical arguments involving
  modular arithmetic.  Consequently, we use $L_n$ (with action of $M$
  mod $1$) and $\tilde{L}_n$ (with action mod $n$) in parallel.
  \exend
\end{remark}

{}From now on, we use the abbreviation $\ZZ_n = \ZZ/n\ZZ$ for the
finite integer ring mod $n$, and $\ZZ_n^{\times}=\{1\le k\le n \mid
\gcd (k,n) = 1\}$ for its unit group.  Let now $n\ge 2$ be arbitrary,
but fixed.  If we have a matrix $A\in\GL (2,\ZZ)$, its reduction mod
$n$ is still invertible in $\Mat (2,\ZZ_n)$, and thus an element of
$\Mat (2,\ZZ_n)^{\times}$, the unit group within $\Mat (2,\ZZ_n)$.
This group is often also called $\GL (2,\ZZ_n)$, though its elements
need not have determinant $\pm 1$.

\begin{remark} \label{remark-localauto}
  A matrix $M\in\Mat (2,\ZZ)$ that is not a toral automorphism (so
  $\det(M)\ne \pm 1$) may still be invertible on a given lattice $L_n$
  (or on $\tilde{L}_n$), meaning that $M$ mod $n$ is an element of
  $\Mat (2,\ZZ_n)^{\times}$. This happens precisely when $\det (M) \in
  \ZZ$ is relatively prime to $n$, which is equivalent to $\det (M) \in
  \ZZ_{n}^{\times}$ (where $\det (M)$ is taken mod $n$).  \exend
\end{remark}

When $M$ is a toral automorphism, or a toral endomorphism with $\det
(M) \in \ZZ_{n}^{\times}$, the invertible action of $M$ on the finite
set $L_n$ induces a \emph{permutation} of the $n^2$ elements of $L_n$,
with one element ($x=0$) always fixed. So, the induced permutation
$\pi^{(n)}_{M}$ can either be viewed as an element of the symmetric
group $S_{n^2}$ or of $S_{n^2 - 1}$. The permutation is of finite
order, which must divide $(n^2 -1) !$ by Lagrange's theorem. The
actual order of $M$ on $L_n$ is given by
\begin{equation} \label{per-one}
  \ord\ts (M,n) \, := \;
  \gcd\ts \{ m\in\NN_{0} \mid M^m \equiv\one \; \bmod{n} \}\ts .
\end{equation}
Clearly, $\ord\ts (M,1)=1$ in this setting. When $M$ is not invertible
on $L_n$, the definition results in $\ord\ts (M,n)=0$; otherwise,
$\ord\ts (M,n)$ is the smallest $m\in\NN$ with $M^m=\one$ mod $n$.

By an application of Fact~\ref{fact-one}, the orbit count sequence on
$L_n$ has vanishing entries beyond index $m=\ord\ts (M,n)$.  It is
natural to define $a^{(n)}_{\ell} := \card \{ x\in L_n\mid M^{\ell} x
= x \;\mbox{\rm (mod $1$)} \}$ and
\[
   c^{(n)}_{\ell}  := \, \card
   \{ \mbox{cycles of $M$ on $L_n$ of length $\ell$} \} 
\]
as the induced fixed point and orbit counts on $L_n$, again mutually
related as in Eqs.~\eqref{a-from-c} and \eqref{c-from-a}.  It is not
hard to see that
\begin{equation} \label{perord-one}
  \ord\ts (M,n) \, = \;\per\ts (M,n) \, := \;
  \lcm\ts \{ m\in\NN \mid c_{m}^{(n)} \ne 0\}\ts ,
\end{equation}
meaning that $\per\ts (M,n)$ is the $\lcm$ of the lengths of the
cycles on $L_n$.  Clearly, we can only have $c^{(n)}_{m}\neq 0$ when
$m$ is a divisor of $\per\ts (M,n)$.

When $M$ is a toral endomorphism with $\gcd (\det (M),n) \ne 1$, it is
not invertible on the lattice $L_n$.  Consequently, not all points of
$L_n$ show up in periodic orbits of $M$, and one has `pretails' to the
periodic orbits (there is always at least one periodic orbit on
$L_n$). Two matrices on $L_n$ may thus share the same cycle structure
there, but show different pretail patterns. As the latter give rise to
a directed pseudo-graph on $L_n$ (where `pseudo' simply refers to the
possibility of directed loops at a vertex), with the points of $L_n$
as vertices and the directed edges derived from the action of $M$, it
is reasonable to coin an adequate definition that covers both the case
when $M$ is invertible on $L_n$ and when it is not.

\begin{defin} \label{defn-local}
  We say that two matrices $M$ and $M^{\ts\prime}$ have the same
  \emph{local statistics} on $L_n$ when the induced directed
  pseudo-graphs on $L_n$ are isomorphic as graphs. When $M$ and
  $M^{\ts\prime}$ are invertible on $L_n$, this is equivalent to
  saying that they have the same periodic orbit counts.  Otherwise,
  this is equivalent to saying that they have the same periodic orbit
  counts plus isomorphic pretails between corresponding periodic
  orbits of the same length.
\end{defin}

This definition shows that two matrices with different pretail
structures for equal orbit counts would \emph{not} have the same local
statistics.  Note that, when $M$ and $M^{\ts\prime}$ are invertible on
$L_n$, they have the same local statistics if and only if their
associated permutations $\pi^{(n)}_{M}$ and
$\pi^{(n)}_{M^{\ts\prime}}$ are conjugate in $S_{n^2}$.

\subsection{Powers of $2\!\times\!2$-matrices}

Let $M\in\Mat (2,\CC)$ be a non-singular matrix, with $D:=\det (M)\neq
0$ and $T:=\trace (M)$. Define a two-sided sequence of (possibly
complex) numbers $p_m$ by the initial conditions $p^{}_{-1} = -1/D$
and $p^{}_{0}=0$ together with the recursion
\begin{equation} \label{recursion}
  \begin{split}
     p_{m+1} & \,  = \, T\ts p_{m} - D\ts p_{m-1}\ts ,
      \quad \text{for $m\ge 0$,}   \\ 
     p_{m-1} & \,  = \, \frac{1}{D}\ts (T\ts p_{m} - p_{m+1})\ts ,
      \quad \text{for $m\le -1$.}
  \end{split}
\end{equation}
This way, as $D\neq 0$, $p_m$ is uniquely defined for all $m\in\ZZ$.
Note that the sequence $(p_m)_{m\in\ZZ}$ depends only on the
determinant and the trace of $M$. When $M\in\Mat (2,\ZZ)$, one has
$p_m\in\QQ$, and $p_m\in\ZZ$ for $m\ge 0$. When $M\in\GL (2,\ZZ)$, all
$p_m$ are integers.

Let us first note an interesting property, which follows from a
straight-forward induction argument (in two directions).
\begin{fact} \label{quadratic-relation}
  The two-sided sequence of rational numbers defined
  by the recursion $\eqref{recursion}$ satisfies the
  relation $p_{m}^{2} - p_{m+1}^{}\ts p_{m-1}^{} = D^{m-1}_{}$,
  for all $m\in\ZZ$.  \qed
\end{fact}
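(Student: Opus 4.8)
The plan is to prove the identity $p_m^2 - p_{m+1}\ts p_{m-1} = D^{m-1}$ by induction, carrying the induction in both directions away from a convenient base case. A natural base case is $m=0$: there $p_0 = 0$, $p_{-1} = -1/D$, and $p_1 = T\ts p_0 - D\ts p_{-1} = 1$ (using the recursion with $m=0$), so the left-hand side is $0^2 - 1\cdot(-1/D) = 1/D = D^{-1}$, matching the claimed value $D^{m-1} = D^{-1}$ at $m=0$. This checks the anchor of the induction.

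For the forward step, assume $p_m^2 - p_{m+1}\ts p_{m-1} = D^{m-1}$ and use the recursion $p_{m+1} = T\ts p_m - D\ts p_{m-1}$ to rewrite the quantity at index $m+1$:
\[
   p_{m+1}^2 - p_{m+2}\ts p_{m} \, = \, p_{m+1}^2 - (T\ts p_{m+1} - D\ts p_m)\ts p_m
   \, = \, p_{m+1}^2 - T\ts p_{m+1}\ts p_m + D\ts p_m^2 .
\]
Now substitute $T\ts p_{m+1} = p_{m+2} + D\ts p_m$ is circular, so instead substitute $T\ts p_m = p_{m+1} + D\ts p_{m-1}$ into the middle term, giving $p_{m+1}^2 - p_{m+1}(p_{m+1} + D\ts p_{m-1}) + D\ts p_m^2 = -D\ts p_{m+1}\ts p_{m-1} + D\ts p_m^2 = D\,(p_m^2 - p_{m+1}\ts p_{m-1}) = D\cdot D^{m-1} = D^{m}$, which is exactly the claim at index $m+1$. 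The backward step is entirely analogous, using the second line of \eqref{recursion}, namely $p_{m-1} = \tfrac1D(T\ts p_m - p_{m+1})$, to express the index-$(m-1)$ quantity in terms of the index-$m$ one; the same algebra runs in reverse and yields $p_{m-1}^2 - p_m\ts p_{m-2} = D^{m-2}$.

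There is essentially no obstacle here; the only thing to be careful about is bookkeeping with the two-sided recursion, i.e.\ making sure that the substitution used in the induction step genuinely reduces the index-$(m+1)$ expression to the index-$m$ expression without invoking an unproven case, and that division by $D$ (legitimate since $D\neq 0$) is handled cleanly in the backward direction. Since the recursion is linear with constant coefficients, one could alternatively note that $q_m := p_m^2 - p_{m+1}\ts p_{m-1}$ and $D^{m-1}$ both satisfy the same first-order multiplicative recursion $q_{m+1} = D\,q_m$ (this is precisely what the computation above shows) and agree at $m=0$, hence agree for all $m\in\ZZ$; but the direct two-way induction is the cleanest write-up.
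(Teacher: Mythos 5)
Your proof is correct, and it is precisely the ``straight-forward induction argument (in two directions)'' that the paper explicitly says this Fact follows from (the paper gives no further details). The base case at $m=0$ and the forward/backward induction steps via the recursion \eqref{recursion} are all sound.
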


\begin{lemma} \label{iterates}
  Let $M\in\Mat (2,\CC)$ be non-singular.
  For $m\in\ZZ$, one has the relation
 \[
    M^m \, = \, p_m\, M - D\ts p_{m-1}\, \one \ts .
\]
  In particular, one has $M^{-1} = \frac{1}{D}\ts (T\ts\one - M)$.
\end{lemma}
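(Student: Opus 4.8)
The plan is to prove the identity $M^m = p_m M - D\ts p_{m-1}\ts\one$ by induction on $m$, working outward from the base cases in both directions along the two-sided recursion. For $m=0$ the claim reads $M^0 = p_0 M - D\ts p_{-1}\ts\one = 0\cdot M - D\cdot(-1/D)\ts\one = \one$, which holds, and for $m=1$ it reads $M^1 = p_1 M - D\ts p_0\ts\one$; since $p_1 = T\ts p_0 - D\ts p_{-1} = 0 - D\cdot(-1/D) = 1$ and $p_0 = 0$, this gives $M = M$, as required. These two base cases anchor the induction.

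For the induction step in the forward direction, suppose the formula holds for some $m$ and $m-1$ with $m\ge 1$. Multiplying the relation $M^m = p_m M - D\ts p_{m-1}\ts\one$ by $M$ gives $M^{m+1} = p_m M^2 - D\ts p_{m-1} M$. Now I would eliminate $M^2$ using the Cayley--Hamilton theorem, $M^2 = T M - D\ts\one$, which is valid for any $2\times 2$ matrix with trace $T$ and determinant $D$. Substituting yields
\[
  M^{m+1} \, = \, p_m (T M - D\ts\one) - D\ts p_{m-1} M
  \, = \, (T\ts p_m - D\ts p_{m-1})\, M - D\ts p_m\ts\one
  \, = \, p_{m+1}\, M - D\ts p_m\ts\one \ts ,
\]
using precisely the recursion \eqref{recursion} for $p_{m+1}$. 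This closes the induction for all $m\ge 0$. The backward direction is symmetric: assuming the formula for $m$ and $m+1$ with $m\le 0$, multiply $M^m = p_m M - D\ts p_{m-1}\ts\one$ by $M^{-1}$ (legitimate since $M$ is non-singular) and again use $M^{-1} = \frac{1}{D}(T\ts\one - M)$, which itself follows from Cayley--Hamilton; simplifying and invoking the second line of \eqref{recursion} gives $M^{m-1} = p_{m-1} M - D\ts p_{m-2}\ts\one$, completing the induction over all negative $m$ as well.

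Finally, the special case $M^{-1} = \frac{1}{D}(T\ts\one - M)$ can either be quoted as the instance $m=-1$ of the general formula (noting $p_{-1} = -1/D$ and $p_{-2} = \frac{1}{D}(T\ts p_{-1} - p_0) = -T/D^2$, so that $p_{-1} M - D\ts p_{-2}\ts\one = -\frac{1}{D} M + \frac{T}{D}\ts\one$), or simply derived directly from Cayley--Hamilton as $M(T\ts\one - M) = T M - M^2 = D\ts\one$. I would mention both, since the direct derivation is needed anyway to justify using $M^{-1} = \frac{1}{D}(T\ts\one - M)$ in the backward induction step. There is no real obstacle here: the only point requiring minor care is bookkeeping the two-sided indexing so that the recursion is applied in the correct direction at each end, and making sure the base cases genuinely match the formula before running the induction.
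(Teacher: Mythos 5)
Your proof is correct and follows essentially the same route as the paper: induction in both directions from the base case, using Cayley--Hamilton (equivalently $M^2 = TM - D\one$, and $M^{-1} = \frac{1}{D}(T\one - M)$ for the backward step) together with the two-sided recursion for $p_m$. The only difference is that you spell out the backward induction and the $m=-1$ check explicitly, where the paper just remarks that the negative case follows analogously.
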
 
\begin{proof}
The initial conditions $p^{}_{-1}=-1/D$ and $p^{}_{0}=0$ imply that
the relation $M^0=\one$ is matched. Let us first look at the positive
powers of $M$.  Assuming the claim to hold for some integer $m\ge 0$,
we can use the Cayley-Hamilton theorem to proceed inductively. Indeed,
from $M^2 = \trace(M)\, M - \det (M)\, \one = T\ts M - D\,\one$, one
finds with \eqref{recursion} that
\[
  \begin{split}
   M^{m+1} & \, = \, M^m \, M \, = \,
   p_m\ts M^2 - D\ts p_{m-1}\ts M \\
   & \, = \, (T\ts p_m - D\ts p_{m-1}) M - D\ts p_m\ts\one
   \, = \, p_{m+1}\ts M - D\ts p_m\ts\one\ts ,
  \end{split}
\]
which establishes the claim for all $m\ge 0$. The special relation for
$M^{-1}$ is just a reformulation of the inverse of a
$2\!\times\!2$-matrix, while the statement about the negative powers
follows from another induction argument, using the reverse recursion
for the $p_m$ with negative index.
\end{proof}

\begin{remark} \label{zero-det} 
  When $M\in\Mat (2,\CC)$ is \emph{singular}, so $D=\det (M)=0$, one
  can still meaningfully define the numbers $p_m$ for all $m\ge 1$. In
  fact, they are then simply given by $p_m = T^{m-1}$, with $T=\trace
  (M)$.  The formula from Lemma~\ref{iterates} simplifies to $M^m =
  p_m\ts M = T^{m-1}\ts M$, which is valid for all $m\ge 1$, while
  Fact~\ref{quadratic-relation} remains true for all $m\ge 2$.
  The numbers $p_m$ are particularly useful to determine the
  periods $\ord\ts (M,n)$ of Eq.~\eqref{per-one}.
\exend  
\end{remark}

\section{Dynamical zeta functions and periodic orbit statistics}
\label{sec-three}

To deal with combinatorial quantities such as the fixed point counts
$a_m$, it is advantageous to employ generating functions. They provide
a nice encapsulation of these numbers and permit the derivation of
several asymptotic properties as well. Here, the concept of a
\emph{dynamical zeta function}, compare \cite{R}, is usually most
appropriate.  Consequently, given a matrix $M\in\Mat (2,\ZZ)$, we set
\begin{equation} \label{define-zeta}
   \zeta^{}_{M} (t) \, := \; \exp\Bigl( \sum_{m = 1}^{\infty}
        \frac{a_m}{m}\, t^m \Bigr),
\end{equation}
where, from now on, $a_m := \card \{ x\in\fix_m (M)\mid x \text{ is
isolated} \}$ is the number of \emph{isolated} fixed points of $M^m$.
We say more about this below.

\begin{remark}
The ordinary power series generating function for the counts $a_m$ can
be calculated from $\zeta^{}_{M} (t)$ as $\sum_{m\ge 1} a_m\ts t^m =
t\ts\frac{\dd}{\dd t} \log \big(\zeta^{}_{M} (t)\big)$. The
significance of the formulation used in Eq.~\eqref{define-zeta}
follows from the fact that it has a unique Euler product decomposition
\cite{R} as
\begin{equation} \label{euler1}
   \frac{1}{\zeta^{}_{M} (t)} \;\, = \!
   \prod_{\text{cycles } \mathcal{C}}
   \big( 1- t^{\lvert \mathcal{C}\rvert}\big) \; = \,
   \prod_{m\ge 1}\, \big(1-t^m \big)^{c_m},
\end{equation}
where $\lvert \mathcal{C}\rvert$ stands for the length of the cycle
$\mathcal{C}$ and $c_m$ is now the number of \emph{isolated} cycles
of $M$ on $\TT^2$ of length $m$, as determined from Formula
\eqref{c-from-a}. Consequently, the role of cycles in dynamics is
similar to that of primes in elementary number theory.
\exend
\end{remark}

\subsection{Global considerations}

Let $M\in\Mat (2,\ZZ)$ and observe that $\fix_m (M)$ is an Abelian
group. In fact, it is a closed subgroup of $\TT^{2}$.  Consequently,
when $1$ happens to be an eigenvalue of $M^m$ for some $m$, there is a
continuous subgroup of fixed points of $M^m$, and one cannot have any
isolated fixed points in addition, due to the group structure of
$\fix_m (M)$, see \cite{BHP} for details. For a given toral
endmorphism, any set $\fix_m (M)$ is thus either finite (when $0$ is
an isolated fixed point) or a continuous submanifold of $\TT^2$.

With hindsight, this motivates our definition in \eqref{define-zeta},
and we now need to have an explicit formula for the number $a_m$ of
isolated fixed points of $M\in\Mat (2,\ZZ)$ on $\TT^2$. This is possible
via a standard argument that involves areas of fundamental domains
of planar lattices, compare \cite{BHP,DEI}. The subtorus case
is treated in \cite[Appendix]{BHP}.
\begin{fact}  \label{count-formula}
  If $M\in\Mat (2,\ZZ)$ is an arbitrary integer matrix,
  the number of isolated fixed points of $M^m$ on $\TT^{2}$ 
  is given by
\[
      a_m \, = \, \bigl\lvert\det(M^m - \one)\bigr\rvert \ts ,
\]
  which is valid for all $m\in\NN$.  In particular, all these counts
  $a_m$ and the corresponding cycle numbers $c_m$ are finite. 
  Moreover, whenever $a_m=0$, no isolated fixed points exist,
  and one has subtori of fixed points instead.   \qed
\end{fact}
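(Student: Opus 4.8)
The plan is to reduce the fixed-point equation on $\TT^2$ to a lattice-counting problem in $\RR^2$. Writing $N := M^m - \one \in \Mat(2,\ZZ)$, a point of $\TT^2$ with representative $x \in \RR^2$ lies in $\fix_m (M)$ precisely when $M^m x \equiv x \pmod 1$, that is, when $N x \in \ZZ^2$. The solution set $S := \{x \in \RR^2 : Nx \in \ZZ^2\}$ is a subgroup of $\RR^2$ containing $\ZZ^2$ (because $N\ZZ^2 \subseteq \ZZ^2$), and $\fix_m (M) = S/\ZZ^2$ is the closed subgroup of $\TT^2$ recalled above, see \cite{BHP}. The count then splits according to whether $1$ is an eigenvalue of $M^m$, equivalently whether $\det N = 0$.

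First I would treat the generic case $\det N \neq 0$. Here $N$ is invertible over $\QQ$, so $S = N^{-1}\ZZ^2$ is a lattice with $\ZZ^2 \subseteq N^{-1}\ZZ^2$, and $\fix_m (M) = N^{-1}\ZZ^2/\ZZ^2$ is finite, every point being isolated. Its cardinality is the index $[N^{-1}\ZZ^2 : \ZZ^2]$, which by the usual fundamental-domain argument equals $\mathrm{area}(\RR^2/\ZZ^2) \big/ \mathrm{area}(\RR^2/N^{-1}\ZZ^2) = 1 \big/ \lvert \det(N^{-1}) \rvert = \lvert \det N \rvert$; alternatively, one reads this off the Smith normal form $N = U \diag(d_1,d_2) V$ with $U, V \in \GL(2,\ZZ)$, for which the index is visibly $d_1 d_2 = \lvert \det N \rvert$. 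This yields $a_m = \lvert \det(M^m - \one) \rvert$, and the finiteness of the $c_m$ follows from Formula~\eqref{c-from-a}.

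It then remains to treat $\det N = 0$, and to show that $a_m = 0$ with a subtorus of fixed points appearing instead. If $N = 0$, then $S = \RR^2$ and $\fix_m (M) = \TT^2$. If $\mathrm{rank}\, N = 1$, then, since the rows and columns of $N$ are integer vectors, both $\ker N$ and the image $N\RR^2$ are lines through the origin with rational direction; hence $\varLambda := \ZZ^2 \cap N\RR^2$ is a rank-one lattice, the solution set $S = \{x : Nx \in \varLambda\}$ is a countable union of parallel translates of the line $\ker N$, and $S/\ZZ^2$ is a \emph{proper} closed subgroup of $\TT^2$ containing the one-dimensional subtorus $(\ker N + \ZZ^2)/\ZZ^2$. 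Such a subgroup is a finite union of translates of that subtorus, hence a one-dimensional closed submanifold of $\TT^2$ with no isolated points, so $a_m = 0 = \lvert \det N \rvert$ in this case as well. The routine parts here are the two index/area computations; the only step requiring a little care is the rank-one case, where one must check that $\ker N$ and $N\RR^2$ have rational directions (so that $S/\ZZ^2$ is a genuine closed submanifold rather than a dense winding line) and that its components are indexed by $\varLambda$. For this finer subtorus description we follow \cite[Appendix]{BHP}.
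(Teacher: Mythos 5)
Your proof is correct and follows essentially the same approach the paper itself delegates to \cite{BHP,DEI}: the fundamental-domain index computation $[N^{-1}\ZZ^2:\ZZ^2]=\lvert\det N\rvert$ for the non-singular case, and the structure of proper closed subgroups of $\TT^2$ (finite unions of cosets of a one-dimensional subtorus) for the singular case. The paper gives no proof of its own for this Fact, only pointing to the ``standard argument that involves areas of fundamental domains of planar lattices'' with the subtorus case in \cite[Appendix]{BHP}; your write-up simply supplies that argument in a self-contained form.
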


One can express $a_m$ in terms of the numbers $p_m$ from
\eqref{recursion}.  Indeed, when $D=\det (M)\neq 0$, by inserting
the formula from Lemma~\ref{iterates} and by also using
Fact~\ref{quadratic-relation}, one finds
\begin{equation}
   \det (M^m - \one) \, = \, -p_{m+1} + D\ts p_{m-1}
     + D^m + 1 \ts ,
\end{equation}
which is valid for all $m\ge 1$. By Remark~\ref{zero-det}, one can
check that inserting $D=0$ gives the correct formula also for 
$\det(M)=0$, namely $\det (M^m-\one) = 1 - T^m$, with $T=\trace
(M)$. With these formulae, one can determine the dynamical zeta
functions of the isolated fixed points for many cases explicitly,
compare \cite[Sec.~I.4]{Smale} for an alternative approach.

\begin{example}\label{ex:divzeta}
  Let us start with $M\in\Mat (2,\ZZ)$ being singular, and of trace
  $T$. One finds $\zeta^{}_{M} (t) = 1/(1-t)$ for $T=0$ and
\begin{equation} \label{zeta-for-zero-det}
   \zeta^{}_{M} (t) \, = \, 
   \frac{1-\sgn (T)\ts t}{1-\lvert T \rvert\ts t}
\end{equation}
for $T\neq 0$ (note the special role of $T = 1$, with $\zeta^{}_{M}
(t) = 1$, for the existence of subtori of solutions). The explicit
derivation follows from Remark~\ref{zero-det} and the standard 
power series identity (with $\varrho =1$ as radius of convergence)
\begin{equation}\label{log-series}
    \log (1-z) \, = \,
    - \sum_{m=1}^{\infty} \frac{z^m}{m}\ts .
\end{equation}

When $M=k\one$ with $k\in\ZZ$, a simple calculation results in
\begin{equation} \label{k-times-id}
   \zeta^{}_{M} (t) \, = \, 
   \frac{(1-k\ts t)^2}{(1-t)\ts (1-k^2\ts t)} \ts ,
\end{equation}
which is also valid for $M=\left(\begin{smallmatrix} k & b \\ 0 & k
\end{smallmatrix}\right)$, with $b\in\ZZ$ arbitrary. If $k=1$, we are 
back to a case with subtori of fixed points, again reflected by
$\zeta^{}_{M} (t) =1$.

More generally, consider a non-singular upper triangular matrix of the
form $M=\left(\begin{smallmatrix} a & b \\ 0 & d
  \end{smallmatrix}\right)$ with $ad\neq 0$, so that $\delta = \sgn
(D) \neq 0$. Then, one finds the zeta function
\begin{equation} \label{zeta-for-upper}
   \zeta^{}_{M} (t) \, = \, 
   \frac{\det (\one - \delta t M)}{(1-\delta t)\ts (1-\delta D t)}
   \, = \, \frac{(1-\delta a t)\ts (1-\delta d t)}
   {(1-\delta t)\ts (1-\delta D t)} \ts ,
\end{equation}
again using standard manipulations of power series.
\exend
\end{example}

\subsection{Zeta functions of toral automorphisms}
For $M\in\GL (2,\ZZ)$, Fact~\ref{count-formula} permits a derivation
of the dynamical zeta function as follows, a special case of which was
also given in \cite{DEI}.

\begin{prop} \label{zeta1}
   Let\/ $M\in\GL (2,\ZZ)$ be hyperbolic, and define\/
   $\sigma=\sgn \bigl(\trace (M)\bigr)$. Then, 
   with the coefficients\/ $a_m = \card \{ x\in\TT^2\mid M^m x=x 
   \mbox{ \rm (mod $1$)}\}$, the dynamical zeta function\/ 
   \eqref{define-zeta} of\/  $M$ on\/ $\TT^2$ is given by
\[
    \zeta^{}_{M} (t)  \, = \,
    \frac{(1-\sigma t) (1-\sigma t\ts \det(M))}
    {\det (\one-\sigma t M )}  \, = \,
    \frac{(1-\sigma\ts t) (1-\sigma \det(M)\, t)}
    {1-\lvert\trace (M)\rvert\, t+\det(M)\, t^2} \ts .
\]
   In particular, $\zeta^{}_{M} (t)$ is a rational
   function, with numerator and denominator in\/ $\ZZ[t]$. The
   denominator is a quadratic polynomial that is irreducible over
   $\ZZ$.  Its zero $t_{\rm min}$ closest to $0$ gives the radius of
   convergence of $\zeta^{}_{M} (t)$, as a power series around\/ $0$,
   via $r_{\rm c} = \lvert t_{\rm  min} \rvert$.
\end{prop}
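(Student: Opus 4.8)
The plan is to feed the explicit count $a_m = \lvert\det(M^m-\one)\rvert$ of Fact~\ref{count-formula} into the defining exponential~\eqref{define-zeta} and to evaluate the resulting series by means of the logarithm series~\eqref{log-series}. Write $D = \det(M)\in\{1,-1\}$, $T = \trace(M)$, and let $\lambda,\lambda'$ denote the eigenvalues of $M$. Since $\lambda\lambda' = D = \pm 1$, a non-real eigenvalue would force $\lvert\lambda\rvert^2 = \lvert D\rvert = 1$; as hyperbolicity rules out $\lvert\lambda\rvert = 1$, the eigenvalues are real and we may take $\lvert\lambda\rvert > 1 > \lvert\lambda'\rvert$. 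Hyperbolicity also forces $T\ne 0$ (for $T=0$ one would get eigenvalues $\pm\sqrt{-D}\in\sph^1$), and from $T = \lambda + D/\lambda$ one reads off $\sigma = \sgn(T) = \sgn(\lambda)$, so that $\sigma\lambda = \lvert\lambda\rvert > 1$ while $\lvert\sigma\lambda'\rvert < 1$. Finally, no power of a hyperbolic $M$ has eigenvalue $1$, so Fact~\ref{count-formula} applies with no subtori present and $a_m = \lvert\det(M^m-\one)\rvert$.

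The crucial point is then the sign identity $\sigma^m\det(M^m-\one) < 0$ for every $m\ge 1$. Writing $\det(M^m-\one) = (\lambda^m-1)\bigl((\lambda')^m-1\bigr)$, the second factor is negative because $\lvert(\lambda')^m\rvert < 1$, while $\sigma^m(\lambda^m-1) = (\sigma\lambda)^m - \sigma^m \ge (\sigma\lambda)^m - 1 > 0$. Hence $a_m = \lvert\det(M^m-\one)\rvert = -\sigma^m\det(M^m-\one) = \sigma^m\bigl(\trace(M^m) - 1 - D^m\bigr)$, using $\det(M^m-\one) = D^m - \trace(M^m) + 1$. Since $\sigma^m\trace(M^m) = (\sigma\lambda)^m + (\sigma\lambda')^m$, inserting this into~\eqref{define-zeta} and applying~\eqref{log-series} to each of the four geometric contributions $(\sigma\lambda)^m$, $(\sigma\lambda')^m$, $-\sigma^m$, $-(\sigma D)^m$ gives
\[
  \zeta^{}_{M}(t)\, =\, \frac{(1-\sigma t)\,(1-\sigma D\,t)}{(1-\sigma\lambda\,t)(1-\sigma\lambda'\,t)}
  \,=\, \frac{(1-\sigma t)\,(1-\sigma D\,t)}{\det(\one - \sigma t M)}\ts ,
\]
and the second form in the statement is just $\det(\one - \sigma t M) = 1 - \sigma T\,t + D\,t^2 = 1 - \lvert T\rvert\,t + D\,t^2$.

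For the remaining assertions, numerator and denominator clearly lie in $\ZZ[t]$, and the denominator has degree $2$ because $D\ne 0$. A rational root $t_0$ of the denominator would make $\sigma/t_0$ a rational eigenvalue of $M$; being a root of the monic polynomial $x^2-Tx+D$, it would then be an integer dividing $D = \pm 1$, hence $\pm 1$ and on $\sph^1$, contradicting hyperbolicity. As the denominator is primitive, Gauss's lemma lifts the absence of rational roots to irreducibility over $\ZZ$. Lastly, the denominator vanishes exactly at $\sigma/\lambda$ and $\sigma/\lambda'$, of moduli $1/\lvert\lambda\rvert<1$ and $1/\lvert\lambda'\rvert>1$, whereas the numerator vanishes only at $t=\pm 1$; since $\lvert\sigma/\lambda\rvert\ne 1\ne\lvert\sigma/\lambda'\rvert$ there is no cancellation, so $\zeta^{}_{M}$ is a rational function whose only poles are those two points. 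The radius of convergence of the power series of a rational function about $0$ equals the distance from $0$ to its nearest pole, whence $r_{\rm c} = \lvert t_{\rm min}\rvert = 1/\lvert\lambda\rvert$.

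The step I expect to be the main obstacle is establishing $\sigma^m\det(M^m-\one)<0$ for all $m$, equivalently $a_m = \sigma^m(\trace(M^m)-1-D^m)$: one has to see that the sign of the dominant factor $\lambda^m-1$ --- which alternates with $m$ when $\sigma=-1$ --- is compensated exactly by $\sigma^m$, while the subdominant factor $(\lambda')^m-1$ remains negative throughout. Everything after that is routine power-series algebra together with the elementary arithmetic of the characteristic polynomial.
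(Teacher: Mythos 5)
Your proposal is correct and follows essentially the same route as the paper: derive $a_m = \sigma^m\bigl(\trace(M^m) - 1 - \det(M)^m\bigr)$ from $a_m = \lvert\det(M^m-\one)\rvert$, feed it into the exponential, and finish with the log series (the paper invokes the Ruelle identity~\eqref{zeta-ruelle}, which is just a repackaging of your explicit eigenvalue computation). You merely fill in the details the paper compresses into ``check the different cases'' by giving the clean sign identity $\sigma^m\det(M^m-\one)<0$, and you argue irreducibility via the rational-root theorem and Gauss's lemma where the paper instead enumerates the two excluded $(\trace,\det)$ pairs --- equivalent arguments leading to the same conclusion.
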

\begin{proof}
Recall first from \cite{R} that, for arbitrary matrices $A\in\Mat (n,\CC)$,
\begin{equation}\label{zeta-ruelle}
   \exp \Bigl(\sum_{k=1}^{\infty} 
        \frac{\trace (A^k)}{k}\, t^k \Bigr)
        \, = \, \frac{1}{\det (\one - t A)}\ts ,
\end{equation}
with convergence for $\lvert t \rvert < 1/\varrho$, where
$\varrho$ is the spectral radius of $A$.

If $M$ is hyperbolic, the general formula for the $a_m$ from 
Fact~\ref{count-formula} can be used to derive 
\[
    a_m \, = \, \sigma^m \bigl( \trace (M^m)
    - (1 + \det(M)^m )\bigr)
\]
by observing that the two eigenvalues of $A$ can be written as $\lambda$
and $\det(A)/\lambda$. For the detailed argument, one may assume
$\lvert\lambda\rvert>1$ and check the different cases. Note that a
hyperbolic toral automorphism is never of trace $0$.
 
The formula for the zeta function now follows from \eqref{define-zeta}
by inserting the expression for $a_m$ and using the relation
\eqref{zeta-ruelle} together with the power series identity
\eqref{log-series}. The statement on the nature of the rational
function is then clear. With $M\in\GL (2,\ZZ)$, the denominator only
factorises for $\trace (M)=0$, $\det(M)=-1$ or for $\trace (M)=\pm 2$,
$\det(M)=1$, both cases being impossible for hyperbolic matrices.  The
result on the radius of convergence is standard.
\end{proof}

\begin{example}\label{ex:golden}
Probably the best known hyperbolic toral automorphism is the one
induced by the `classic' or golden cat map
\[
   M \, = \, \begin{pmatrix} 0 & 1 \\ 1 & 1 \end{pmatrix}.
\]
It has $\det (M)=-1$ and is thus orientation reversing (sometimes, as in
\cite{KH}, its square is used instead). From
Proposition~\ref{zeta1} or from \cite{BHP}, one obtains
\[
   \zeta^{}_{M} (t)  \, = \, \frac{1-t^2}{1-t-t^2}
   \, = \prod_{m\ge 1} \big(1-t^m\big)^{-c_m}
\]
with $a_m = f_{m+1} + f_{m-1} - \big( 1 + (-1)^m\big)$ and $c_m$
according to Eq.~\eqref{c-from-a}, see also entries \texttt{A$\ts$001350}
and \texttt{A$\ts$060280} of \cite{online}. Here, $f_m$ are the Fibonacci
numbers, defined by the recursion $f_{m+1} = f_m + f_{m-1}$, for $m\ge
0$, together with the initial condition $f_0=0$ and $f_{-1}=1$. The
first few terms of the counts are given in Table~\ref{fib-tab}.  As an
aside, note that $\zeta^{}_{M} (t) = 1 + \sum_{m=0}^{\infty} f_m \ts
t^m$, and one has $M^m = f_m\, M + f_{m-1}\, \one$, the latter being valid
for all $m\in\ZZ$.
\exend
\end{example}

\begin{table}
\begin{tabular}{c|rrrrr|rrrrr|rrrrr}
$m$ &  1 &  2 &  3 &  4 &  5 &
       6 &  7 &  8 &  9 & 10 &
      11 & 12 & 13 & 14 & 15 \\
\hline
$a_m$ &   1 &   1 &   4 &   5 &   11 &
         16 &  29 &  45 &  76 &  121 &
        199 & 320 & 521 & 841 & 1364\rule[-1ex]{0ex}{3.5ex} \\
$c_m$ &   1 &   0 &   1 &   1 &    2 &
          2 &   4 &   5 &   8 &   11 &
         18 &  25 &  40 &  58 &   90
\end{tabular}
\bigskip
\caption{Fixed point and orbit counts for the golden cat map. 
\label{fib-tab}}
\end{table}

\begin{remark}\label{rem:pisano}
  In Example~\ref{ex:golden}, when taken mod $n$, the powers $M^m$ are
  periodic in $m$ with period $P_n$, which are known as the
  \emph{Pisano periods}, see entry \texttt{A$\ts$001175} of
  \cite{online}. The periods of the sequences
  $\bigl(a^{(n)}_{m}\bigr)_{m\in\NN}$ divide $P_n$, but can be
  smaller, as happens for $n=4$ (with $3$ versus $P_4=6$) or for $n=5$
  (with $4$ versus $P_5=20$).  \exend
\end{remark}

Let us return to our general discussion.
{}From Proposition~\ref{zeta1}, it is clear that two hyperbolic
$\GL (2,\ZZ)$-matrices with the same trace and determinant
possess the same dynamical zeta function, hence the same fixed point
counts. The converse is slightly more subtle.

\begin{coro} \label{coro-on-zeta}
 Let $M,M^{\prime}\in\GL (2,\ZZ)$ represent two hyperbolic toral 
 automorphisms which have the same fixed point counts. Then,
 $\zeta^{}_{M} (t) = \zeta^{}_{M^{\prime}} (t)$. This implies
 $\det(M^{\prime}) = \det(M)$ and either\/
 $\trace (M^{\prime}) = \trace (M)$ or\/
 $\trace (M^{\prime}) = - \trace (M)$, the latter together with
 $\det(M)=-1$.
\end{coro}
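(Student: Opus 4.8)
The plan is to extract everything from the explicit formula in Proposition~\ref{zeta1}. The first assertion, $\zeta^{}_{M} = \zeta^{}_{M^{\prime}}$, is immediate: by the definition \eqref{define-zeta}, $\zeta^{}_{M} (t)$ depends only on the sequence $(a_m)_{m\in\NN}$, so equal fixed point counts give equal zeta functions, as formal power series and hence (both being rational by Proposition~\ref{zeta1}) as elements of $\QQ(t)$. For the remaining claims, write $\sigma=\sgn(\trace(M))$ --- which is $\pm 1$, since a hyperbolic automorphism has nonzero trace --- and put
\[
  N^{}_{M}(t) = (1-\sigma t)\ts (1-\sigma\det(M)\ts t)\ts ,
  \qquad
  Q^{}_{M}(t) = 1 - \lvert\trace(M)\rvert\ts t + \det(M)\ts t^2\ts ,
\]
so that $\zeta^{}_{M} = N^{}_{M}/Q^{}_{M}$ by Proposition~\ref{zeta1}, and likewise for $M^{\prime}$. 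I claim this is already the representation in lowest terms: by Proposition~\ref{zeta1}, hyperbolicity makes $Q^{}_{M}$ irreducible over $\ZZ$, hence (being primitive of degree $2$) over $\QQ$, so $Q^{}_{M}$ has no rational root; but the roots of $N^{}_{M}$ are $\sigma$ and $\sigma\det(M)$, both lying in $\{1,-1\}$, whence $\gcd(N^{}_{M},Q^{}_{M})=1$.

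Since $N^{}_{M}(0)=N^{}_{M^{\prime}}(0)=1$, uniqueness of the reduced form of a rational function over the field $\QQ$ --- which pins down the scaling unit via the constant terms --- turns $\zeta^{}_{M}=\zeta^{}_{M^{\prime}}$ into the polynomial identities $N^{}_{M}=N^{}_{M^{\prime}}$ and $Q^{}_{M}=Q^{}_{M^{\prime}}$. Comparing coefficients in $Q^{}_{M}=Q^{}_{M^{\prime}}$ gives at once $\det(M^{\prime})=\det(M)$ and $\lvert\trace(M^{\prime})\rvert=\lvert\trace(M)\rvert$, so $\trace(M^{\prime})=\pm\trace(M)$.

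To decide the sign I would split on $\det(M)$. If $\det(M)=1$, then $N^{}_{M}(t)=(1-\sigma t)^2$ and $N^{}_{M^{\prime}}(t)=(1-\sigma^{\prime} t)^2$ with $\sigma^{\prime}=\sgn(\trace(M^{\prime}))$; the identity $N^{}_{M}=N^{}_{M^{\prime}}$ forces $\sigma=\sigma^{\prime}$, which together with $\lvert\trace(M^{\prime})\rvert=\lvert\trace(M)\rvert$ yields $\trace(M^{\prime})=\trace(M)$. If $\det(M)=-1$, then $N^{}_{M}(t)=(1-\sigma t)(1+\sigma t)=1-t^2=N^{}_{M^{\prime}}(t)$ irrespective of the signs, so $N^{}_{M}=N^{}_{M^{\prime}}$ imposes nothing further and both $\trace(M^{\prime})=\trace(M)$ and $\trace(M^{\prime})=-\trace(M)$ are permitted --- precisely the stated alternative, with the sign change confined to $\det(M)=-1$ (and it does occur, e.g.\ for $M$ and $-M$ when $\det(M)=-1$).

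There is no genuine obstacle here beyond bookkeeping; the one step requiring care is the coprimality of $N^{}_{M}$ and $Q^{}_{M}$, i.e.\ that Proposition~\ref{zeta1} already delivers the reduced form. This is exactly where hyperbolicity is used --- it excludes the traces $\pm 2$ at $\det(M)=1$ and the trace $0$ at $\det(M)=-1$, for which $Q^{}_{M}$ factors and could cancel part of $N^{}_{M}$ --- and it is cleanest to invoke the irreducibility assertion already recorded in Proposition~\ref{zeta1} rather than re-deriving it.
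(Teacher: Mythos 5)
Your proposal is correct and follows essentially the same route as the paper's own (very terse) proof: both rely on the irreducibility of the denominator from Proposition~\ref{zeta1} to conclude that the rational expression for $\zeta^{}_{M}$ is in lowest terms, then equate numerators and denominators and read off the invariants. You simply make explicit the coprimality and normalisation steps that the paper leaves implicit, and your case split on $\det(M)=\pm 1$ for the sign of the trace is exactly what ``equating the numerators'' amounts to.
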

\begin{proof}
  The claimed equality of the zeta functions is clear. As the
  denominator is irreducible over $\ZZ$ by Proposition~\ref{zeta1}, 
  we get $\det(M^{\prime}) = \det(M)$ and $\lvert\trace
  (M^{\prime})\rvert = \lvert\trace (M)\rvert$.  Equating the
  numerators results in the two possibilities stated.
\end{proof}

\begin{remark}
The second possibility of Corollary~\ref{coro-on-zeta} is realised
by any orientation reversing $\GL (2,\ZZ)$-matrix $M$ together with 
$M^{\ts\prime}=-M$. One can also check this property by an explicit
calculation, using Fact~\ref{count-formula} in conjunction with 
Lemma~\ref{iterates} and Fact~\ref{quadratic-relation}.

A concrete example is provided by the golden cat map
(or Fibonacci matrix) $M=\left(\begin{smallmatrix} 0 & 1 \\ 1 & 1
\end{smallmatrix}\right)$ of Example~\ref{ex:golden}.
Since $-\one$ is a lattice symmetry for all $L_n$, 
it is clear that also the local statistics of $M$ and $-M$
is the same on all lattices.
\exend
\end{remark}

The formula of Proposition~\ref{zeta1} does not necessarily hold for
the elliptic and parabolic elements of $\GL (2,\ZZ)$.  However, those
cases can be derived from the formulas given in
Example~\ref{ex:divzeta}, and result, in our setting, in the
dynamical zeta function for the \emph{isolated} fixed points.

\subsection{Generating functions on lattices}

Let us now consider a toral endomorphism $M$ on the lattice $L_n$, for
some $n\in\NN$. It is clear that $L_n$ is mapped onto itself under the
action of $M$ (mod $1$).  Alternatively, by Remark~\ref{remark-one},
we may consider $\tilde{L}_n$, and thus the action of the reduction of
$M$ mod $n$.  Recalling that $\card (L_n)=\card (\tilde{L}_n)=n^2$,
the following observation is obvious.
\begin{fact} \label{general-poly} Let $M\in\Mat (2,\ZZ)$ and $n\in\NN$
  be fixed. Then, only finitely many of the orbit counts
  $c^{(n)}_{\ell}$ on $L_n$ can be non-zero, and the dynamical 
  zeta function for the action of $M$ on $L_n$ has the property that
\[
      Z_n (t) \, := \, \frac{1}{\zeta^{(n)}_{M} (t)} \, = \,
      \prod_{\ell \ge 1} (1-t^\ell)^{c^{(n)}_{\ell}}   
\]
  is a finite product and defines a polynomial in $\ZZ[t]$ of 
  degree at most $n^2$.    \qed
\end{fact}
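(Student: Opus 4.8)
The plan is to use nothing beyond the finiteness of $L_n$. Since $\card (L_n) = n^2$, the map induced by $M$ on $L_n$ (equivalently, the reduction of $M$ mod $n$ acting on $\tilde{L}_n$, by Remark~\ref{remark-one}) is a self-map of an $n^2$-element set, and in its directed pseudo-graph every vertex has out-degree $1$. Such a map is eventually periodic: each $x\in L_n$ reaches a cycle after finitely many iterations, the set $P\subseteq L_n$ of points lying on cycles is $M$-invariant, and $M$ restricted to $P$ is a permutation whose cycle decomposition realises the counts $c^{(n)}_{\ell}$. As the cycles are pairwise disjoint subsets of $L_n$, their lengths sum to at most $n^2$, i.e.
\[
  \sum_{\ell\ge 1} \ell\, c^{(n)}_{\ell} \;=\; \card (P) \;\le\; n^2 .
\]
In particular $c^{(n)}_{\ell} = 0$ for all $\ell > n^2$, which proves the first assertion. (All points of $L_n$ are automatically isolated, so there is no subtlety of the kind that motivated the restriction to isolated fixed points on $\TT^2$.)

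For the statement on $Z_n(t)$, I would observe that the inversion relations \eqref{a-from-c} and \eqref{c-from-a} hold verbatim for the lattice counts $a^{(n)}_m$ and $c^{(n)}_m$, so the same formal computation that gives the Euler product \eqref{euler1}---starting from $\zeta^{(n)}_M(t) = \exp\bigl(\sum_{m\ge 1} a^{(n)}_m t^m/m\bigr)$ and using the series identity \eqref{log-series}---yields
\[
  Z_n(t) \;=\; \frac{1}{\zeta^{(n)}_M(t)} \;=\; \prod_{\ell\ge 1} \bigl(1-t^{\ell}\bigr)^{c^{(n)}_{\ell}} .
\]
Since all but finitely many exponents $c^{(n)}_{\ell}$ vanish and each is a non-negative integer, the right-hand side is a \emph{finite} product of the polynomials $(1-t^{\ell})^{c^{(n)}_{\ell}}\in\ZZ[t]$, hence $Z_n(t)\in\ZZ[t]$. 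Its degree is $\sum_{\ell\ge 1}\ell\, c^{(n)}_{\ell} = \card(P) \le n^2$ by the displayed bound, with equality precisely when $M$ is invertible on $L_n$ (so that $P=L_n$).

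There is no genuine obstacle here---this is the ``obvious observation'' the text announces---and the only point worth spelling out is why the Euler product passes from $\TT^2$ to $L_n$. The cleanest way to see this is that \eqref{euler1} is a purely formal identity attached to any pair of non-negative integer sequences related by \eqref{a-from-c}; it needs only the definition of the associated dynamical zeta function and \eqref{log-series}, and none of the geometric input (such as the counting formula of Fact~\ref{count-formula}) that was required on the torus. So the lattice case is literally the same algebra applied to $(a^{(n)}_m, c^{(n)}_m)$ in place of $(a_m, c_m)$, now with the extra feature that only finitely many terms occur.
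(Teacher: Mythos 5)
Your argument is correct and is exactly the reasoning the paper treats as immediate (the Fact is stated with a \qed and introduced as an ``obvious observation''): finiteness of $L_n$ bounds $\sum_\ell \ell\, c^{(n)}_\ell$ by $n^2$, and the Euler product is the same formal identity as in \eqref{euler1} applied to the lattice counts. You have simply written out the details the paper leaves implicit.
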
  

Note that the degree of $Z_n (t)$ can indeed be less than $n^2$.  This
happens whenever the action of $M$ is non-invertible on $L_n$, which
manifests itself in the existence of pretails to the actual orbits.
Otherwise, Fact~\ref{general-poly} can be sharpened as follows.
\begin{prop} \label{zeta-is-poly} 
  Let $M\in\GL (2,\ZZ)$ or, more generally, let $\det (M)$ for
  $M\in\Mat (2,\ZZ)$ be coprime with $n$, meaning that the reduction
  of $M$ is an element of\/ $\Mat (2,\ZZ_n)^{\times} =
  \GL(2,\ZZ_{n})$. Then, the
  dynamical zeta function $\zeta^{(n)}_{M} (t)$ satisfies
\[
   \frac{1}{\zeta^{(n)}_{M} (t)} \, = \,
   \prod_{\ell\, | \per\ts (M,n)}\, 
   \bigl(1-t^{\ell}\bigr)^{c^{(n)}_{\ell}}
   \, =: \, Z_n (t)\ts ,
\]
  where $Z_n (t)\in\ZZ[t]$ has degree
  $n^2$. In particular, with $N=\ts\per\ts (M,n)$, one has
\[
   \sum_{\ell\, | N} \ell \,c^{(n)}_{\ell}
   \, = \, a^{(n)}_{N} \, = \, n^2 ,
\]
  and the minimal period of the sequence
  $\bigl(a^{(n)}_{m}\bigr)_{m\in\NN}$ is a divisor of $N$.  \qed
\end{prop}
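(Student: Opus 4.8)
The plan is to exploit the fact that, under the coprimality hypothesis, the reduction of $M$ mod $n$ lies in $\GL (2,\ZZ_n)$ and hence acts as a \emph{bijection} on the finite set $\tilde{L}_n$ (equivalently on $L_n$). First I would recall from Remark~\ref{remark-localauto} and the discussion around Eq.~\eqref{per-one} that such a bijection decomposes $L_n$ into disjoint cycles, with \emph{no pretails}, so that every one of the $n^2$ points lies on some cycle. Collecting points according to cycle length then gives the partition identity $\sum_{\ell\ge 1} \ell\, c^{(n)}_{\ell} = n^2$; since only finitely many $c^{(n)}_{\ell}$ are non-zero by Fact~\ref{general-poly}, and since $c^{(n)}_{\ell}\ne 0$ forces $\ell \mid \per\ts (M,n)$ by Eq.~\eqref{perord-one} (here using $\ord = \per$ from Eq.~\eqref{perord-one}), the product $Z_n (t) = \prod_{\ell\,|\,N}(1-t^{\ell})^{c^{(n)}_{\ell}}$ is indeed a finite product over divisors of $N := \per\ts (M,n)$. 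Its degree is $\sum_{\ell\,|\,N} \ell\, c^{(n)}_{\ell} = n^2$, which proves the degree claim and that $Z_n\in\ZZ[t]$ (being a product of monic integer polynomials).

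Next I would verify $a^{(n)}_{N} = n^2$. By Eq.~\eqref{a-from-c} applied to the lattice counts, $a^{(n)}_{N} = \sum_{d\,|\,N} d\, c^{(n)}_{d}$; but every cycle of $M$ on $L_n$ has length dividing $N = \per\ts (M,n) = \ord\ts (M,n)$ — indeed $M^N = \one$ on $L_n$ fixes every point — so the sum on the right is exactly $\sum_{\ell\,|\,N}\ell\, c^{(n)}_{\ell} = n^2$, as just computed. This simultaneously shows that the fixed-point count sequence attains the value $n^2$ at index $N$ and that it cannot exceed $n^2$ anywhere (each $a^{(n)}_{m}$ counts a subset of $L_n$), consistent with periodicity.

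Finally, for the statement on the minimal period of $\bigl(a^{(n)}_{m}\bigr)_{m\in\NN}$: since only finitely many orbit counts are non-zero, Fact~\ref{fact-one} gives that the sequence is periodic with period $\lcm\{m : c^{(n)}_{m}\ne 0\}$; because every such $m$ divides $N$, this $\lcm$ divides $N$, so the minimal period divides $N$ as well. (One should note the minimal period can be a proper divisor of $N$, as already illustrated in Remark~\ref{rem:pisano} for $n=4,5$ in the golden-cat-map example — so one cannot claim equality.)

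I do not anticipate a genuine obstacle here: the proposition is essentially a bookkeeping consequence of bijectivity plus the inversion formulas \eqref{a-from-c}--\eqref{c-from-a} and Fact~\ref{fact-one}. The only point requiring a moment's care is the clean separation between what holds because $M$ is invertible on $L_n$ (no pretails, full partition of all $n^2$ points into cycles, degree exactly $n^2$) and what would fail in the non-invertible case treated in Fact~\ref{general-poly} (where pretails appear and $\deg Z_n < n^2$); making that distinction explicit is what turns the ``obvious'' into a proof.
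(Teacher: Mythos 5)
Your proof is correct and follows the same route the paper intends: the proposition carries a \qed with no written argument, precisely because it is the bookkeeping consequence you spell out — invertibility mod $n$ (Remark~\ref{remark-localauto}) gives a genuine permutation of $L_n$ with no pretails, so the cycle lengths partition all $n^2$ points and divide $N=\per(M,n)=\ord(M,n)$ (since $M^N\equiv\one$ mod $n$), whence $\deg Z_n = \sum_{\ell\mid N}\ell\, c^{(n)}_\ell = a^{(n)}_N = n^2$, and Fact~\ref{fact-one} shows $N$ is a period of $(a^{(n)}_m)$, so the minimal period divides $N$. You also correctly flag the only subtlety worth stating explicitly: that $N$ is \emph{a} period but not necessarily the minimal one, as in Remark~\ref{rem:pisano}.
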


Concerning the last statement, it is worthwhile to mention that the
minimal period of the fixed point counts on $L_n$ can actually be
smaller than $\per\ts (M,n)$, as we saw in Remark~\ref{rem:pisano}.
Note that $(1-t)\ts | \ts Z_n (t)$ for all $n\ge 1$, since $x=0$ is a
fixed point of $M$ on all $L_n$. Moreover, $m\ts | \ts n$ implies $Z_m
\ts | \ts Z_n$. This induces a partial order on the polynomials $Z_n$,
which permits us to consider the direct (or inductive) limit of them,
considered within the ring of formal power series, $\ZZ[[t]]$, compare
\cite[Sec.~1]{GJ}. In fact, one simply has
\[
   \varinjlim Z_n (t)
   \, = \, \lim_{n\to\infty} \lcm \{ Z_1 (t), Z_2(t), \ldots ,
   Z_n (t)\}\ts ,
\]
where $\lcm$ stands for the least common multiple, which is
well-defined here because the polynomial ring $\ZZ[t]$ is factorial,
see \cite{L}.  It is clear by construction that this limit must divide
$1/\zeta^{}_{M} (t)$, the latter written out as an infinite Euler
product and thus as an element of $\ZZ[[t]]$. In our setting, the
formal power series have positive radius of convergence, so that one
can also extract asymptotic properties of their coefficients by
standard tools from complex analysis, compare \cite{DEI,GJ} for
examples.

\begin{theorem} \label{limit}
  If $M\in\Mat (2,\ZZ)$ is hyperbolic, one has
  $\, \varinjlim Z_n (t)
      \, = \,1/\zeta^{}_{M} (t) $ .   
\end{theorem}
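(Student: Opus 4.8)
The plan is to show the two divisibilities $\varinjlim Z_n(t) \mid 1/\zeta^{}_{M}(t)$ and $1/\zeta^{}_{M}(t) \mid \varinjlim Z_n(t)$ in $\ZZ[[t]]$, the first of which has already been noted in the text. For the reverse direction, the key point is that every isolated periodic orbit of $M$ on $\TT^2$ lives on some lattice $L_n$, by Fact~\ref{fact:rational}: a point $x\in\fix_m(M)$ is rational, hence lies in $L_n$ for $n$ equal to the common denominator of its coordinates. Thus each cycle $\mathcal{C}$ contributing a factor $(1-t^{\lvert\mathcal{C}\rvert})$ to $1/\zeta^{}_{M}(t)$ already appears among the cycles counted by $Z_n(t)$ for that $n$ (note $\det(M)\neq 0$ since $M$ is hyperbolic, and restricting to the finitely many $n$ with $\gcd(\det M, n)=1$ is not needed because the factor is visible on \emph{any} $L_n$ containing the orbit, the non-invertibility only producing extra pretails that do not affect $Z_n$ as a polynomial in $t$).

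First I would fix $m\in\NN$ and read off the coefficient of $t^m$ on both sides. On the left, the product $\prod_{k\ge 1}(1-t^k)^{c_k}$ determines, and is determined by, the finitely many $c_k$ with $k\le m$; on the right, $\varinjlim Z_n(t) = \lim_n \lcm\{Z_1,\ldots,Z_n\}$, and since $Z_n \mid Z_{nm!}$ say, it suffices to compare the low-degree part of $Z_N(t)$ with that of $1/\zeta_M(t)$ for $N$ large and highly divisible. So the real task is: for each $k\le m$, show $c^{(N)}_k = c_k$ once $N$ is divisible by all denominators that can occur among points of $\fix_k(M)$. Since $\fix_k(M)\subset\TT^2\cap\QQ^2$ is \emph{finite} (Fact~\ref{count-formula}, as $M$ is hyperbolic so $1$ is not an eigenvalue of any $M^k$), there are only finitely many such denominators; call their lcm $d_m$. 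Then for any $N$ divisible by $d_m$, every $x$ with $M^kx=x$ ($k\le m$) lies in $L_N$, and conversely $L_N\cap\fix_k(M) = \fix_k(M)$, so the cycles of length $k\le m$ of $M$ on $L_N$ are exactly the isolated cycles of length $k$ on $\TT^2$. Hence $c^{(N)}_k = c_k$ for all $k\le m$, giving agreement of the $t^m$-coefficients, and since $m$ was arbitrary the two power series coincide.

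I would phrase the finiteness input cleanly: because $\fix_k(M)$ is finite for each $k$ and $\fix_1(M)\subseteq\fix_2(M)\subseteq\cdots$ along a divisibility-cofinal chain is not literally increasing, I will instead work divisor-by-divisor, using that $\bigcup_{k\mid m}\fix_k(M)$ is finite with some common denominator $d_m$, and that $L_{d_m}$ already captures all periodic points of period dividing $m$. One should also remark that any $N$ which is a multiple of $d_m$ works, so the lcm in the direct limit catches these, and conversely the direct limit cannot overshoot because of the already-noted divisibility $\varinjlim Z_n(t)\mid 1/\zeta_M(t)$.

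The main obstacle I anticipate is bookkeeping the role of \emph{pretails} when $\gcd(\det M,N)>1$: on such $L_N$ the map is not invertible, $Z_N$ has degree $<N^2$, and points of $L_N$ need not be periodic. However, this is harmless for the argument because we only ever extract \emph{cycle} counts $c^{(N)}_k$, and a point that is genuinely periodic for $M$ on $\TT^2$ remains genuinely periodic (not in a pretail) when viewed in any $L_N$ containing it — its backward orbit under $M$ is also periodic. So the pretails only add terminating branches and never alter which cycles of a given length are present. The only care needed is to confirm that distinct isolated cycles on $\TT^2$ remain distinct on $L_N$ (immediate, as $L_N\hookrightarrow\TT^2$) and that no new short cycle is created on $L_N$ that fails to be a genuine cycle on $\TT^2$ — but a cycle on $L_N$ is a set of points permuted by $M$, hence consists of points $x$ with $M^kx=x$ for $k$ its length, i.e. points of $\fix_k(M)$, which are exactly the points already counted by $c_k$. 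This closes the argument.
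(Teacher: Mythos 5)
Your proposal is correct and follows essentially the same route as the paper's proof: both use the already-noted divisibility $\varinjlim Z_n(t)\mid 1/\zeta^{}_{M}(t)$, then invoke Fact~\ref{fact:rational} (rationality of periodic points) together with the finiteness of each $\fix_m(M)$ guaranteed by hyperbolicity to find an $n$ large enough that $L_n$ absorbs all fixed points of $M^m$, forcing $(1-t^m)^{c_m}\mid Z_n(t)$. The extra bookkeeping you do about pretails and about distinct cycles remaining distinct is sound but not needed in the paper's slicker phrasing, which works factor-by-factor in the Euler product rather than coefficient-by-coefficient.
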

\begin{proof}
  When $M$ has no unimodular eigenvalue, Fact~\ref{count-formula}
  gives a formula for the number of \emph{all} fixed points of $M^m$,
  which is finite.  It is clear that the finitely many fixed points of
  $M^m$ are isolated. Viewing each polynomial $Z_{n} (t)$ as an
  element of $\ZZ[[t]]$, it is clear that  
\[
   \varinjlim Z_n (t) 
   \mid 1/\zeta^{}_{M} (t) \ts ,
\]
where $1/\zeta^{}_{M} (t)$ is written as the Euler product of
\eqref{euler1}, hence as an element of $\ZZ[[t]]$, which contains each
$Z_{n} (t)$ as a factor. It remains to show that each factor
$(1-t^m)^{c_m}$ of $1/\zeta^{}_{M} (t)$ divides $Z_n (t)$ for some
$n\in\NN$ (and then also for all multiples $n'$ of $n$).

Note that all fixed points of $M^m$ lie in $\QQ^2 \!\ts\cap\TT^2
\simeq \QQ^2 / \ZZ^2$ by Fact~\ref{fact:rational}.  Consequently,
there must be some $n=n(m)$ such that all these fixed points are in
$L_n$, which implies $(1-t^m)^{c_m} \ts | \ts Z_n (t)$. The claim now
follows from the general structure of the direct limit.
\end{proof}

\begin{remark}
The direct limit does not exist for all matrices in $\Mat
(2,\ZZ)$. This relates to the observation that endomorphisms with
subtori of fixed points of some order have no isolated fixed points of
the same order, while the intersections with the lattices $L_n$ are
still finite sets. In this situation, $\zeta^{}_{M} (t)$ encapsulates
the isolated fixed point counts only (if any), while $Z_n (t)$
gradually explores also the non-isolated ones.
\exend
\end{remark}

\subsection{Group theoretic interpretation}

Both the torus $\TT^2$ and its lattices $L_n$ are compact Abelian
groups (the latter even being finite). It is thus natural to also
expect some group theoretic interpretation of counting orbits of a
hyperbolic toral endomorphism $M$ on these groups.  Recall that
\begin{equation} \label{group-1} 
     A_m \, := \, \fix_m (M) \, = \,
    \{ x\in\TT^2 \mid M^m x = x \; \mbox{\rm (mod $1$)} \}
\end{equation}
is a subgroup of $\TT^2$, hence a \emph{finite} Abelian group of order
$a_m$ due to the assumption of hyperbolicity.  Knowing $a_m$, however,
generally tells us rather little about $A_m$ as a group. To improve on
this, we employ the elementary divisor theorem for finite Abelian
groups, compare \cite[Thm.~I.4.2]{CR} or \cite[Ch.~I.4.8]{L}. To this
end, consider
\[
    A^{(n)}_{m} \, := \, A^{}_{m} \cap L^{}_{n}
   \, = \,
   \{ x\in A^{}_{m} \mid nx=0 \; \text{(mod $1$)} \} \ts ,
\]
which defines a family of Abelian groups, with
$a^{(n)}_{m} = \card \big(A^{(n)}_{m} \big) \le a^{}_{m}$
and $A^{(n)}_{m} \subset A^{}_{m}$. In fact, since we assume here
that $M$ has no eigenvalues on $\sph^1$, one has
$A^{}_{m} = \bigcup_{n\in\NN} A^{(n)}_{m}$.

\begin{prop}
  The structure of the finite Abelian group $A_m$ of\/
  $\eqref{group-1}$ is completely determined by the numbers
  $a^{(n)}_{m}$ with $n\in\NN$.
\end{prop}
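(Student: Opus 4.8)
The plan is to reconstruct the finite abelian group $A_m$ from the knowledge of the orders $a^{(n)}_{m} = \card\bigl(A^{(n)}_{m}\bigr)$ by using the elementary divisor theorem. Since $A_m$ is a finite abelian group, it decomposes (uniquely up to isomorphism) as $A_m \simeq \ZZ_{d_1}\oplus\cdots\oplus\ZZ_{d_k}$ with $d_1\mid d_2\mid\cdots\mid d_k$; equivalently, one may work prime by prime, writing the $p$-part of $A_m$ as $\ZZ_{p^{e_1}}\oplus\cdots\oplus\ZZ_{p^{e_r}}$ with $e_1\le\cdots\le e_r$. The key observation is that for a finite abelian $p$-group $G$, the number of solutions of $p^j x = 0$ in $G$, i.e.\ $\card\bigl(G[p^j]\bigr)$, is $\prod_{i} p^{\min(j,e_i)}$, and the full sequence $\bigl(\card(G[p^j])\bigr)_{j\ge 0}$ determines the partition $(e_1,\dots,e_r)$ uniquely (take successive quotients of consecutive terms to read off how many exponents $e_i$ equal $1$, how many are $\ge 2$, and so on). So it suffices to recover $\card\bigl(A_m[p^j]\bigr)$ for every prime power $p^j$ from the given data.

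The second step is to identify $A^{(n)}_{m}$ group-theoretically. By definition, $A^{(n)}_{m} = \{x\in A_m\mid nx=0\} = A_m[n]$, the $n$-torsion subgroup of $A_m$. In particular, taking $n=p^j$, we have $a^{(p^j)}_{m} = \card\bigl(A_m[p^j]\bigr)$, which is exactly the quantity needed in the previous paragraph. Thus the $p$-part of $A_m$ — hence (by the coprime decomposition $A_m = \bigoplus_p A_m[p^\infty]$, where $A_m[p^\infty]$ is the $p$-primary component, reached since $A_m$ is finite) the whole group $A_m$ up to isomorphism — is determined by the subfamily $\bigl(a^{(n)}_m\bigr)_{n\in\NN}$ with $n$ a prime power. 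Note that hyperbolicity is used only to guarantee that $A_m$ is finite (Fact~\ref{count-formula}) and that $A_m = \bigcup_{n}A^{(n)}_m$, so that this torsion data exhausts the group.

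Concretely, the argument runs: fix $m$; for each prime $p$, the chain $1 = A_m[p^0]\subset A_m[p]\subset A_m[p^2]\subset\cdots$ stabilises at $A_m[p^\infty]$, and the ratios $a^{(p^{j+1})}_m/a^{(p^j)}_m$ are themselves powers of $p$, say $p^{\,r_j}$, with $r_0\ge r_1\ge r_2\ge\cdots\ge 0$; the number of elementary divisors of $A_m[p^\infty]$ that are divisible by $p^{j}$ equals $r_{j-1}$, which pins down the partition and hence the isomorphism type of $A_m[p^\infty]$. Assembling over all $p$ (only finitely many of which divide $a_m$, hence contribute) gives $A_m$ up to isomorphism. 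The only mildly subtle point — and the one I would flag as the main obstacle to watch — is the elementary verification that the torsion counts $\bigl(\card(G[p^j])\bigr)_j$ really do determine the partition, i.e.\ that distinct partitions give distinct such sequences; this is the standard fact underlying the elementary divisor theorem and can be cited from \cite[Thm.~I.4.2]{CR} or \cite[Ch.~I.4.8]{L}, but one should make sure the counting identity $\card(G[p^j]) = \prod_i p^{\min(j,e_i)}$ is invoked in the right direction. Everything else is bookkeeping.
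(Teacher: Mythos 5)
Your proof is correct and takes essentially the same route as the paper: both decompose $A_m$ via the elementary divisor theorem, identify $A_m^{(n)}=A_m[n]$ as the $n$-torsion subgroup, and exploit the counting identity $\card\bigl(A_m[p^j]\bigr)=p^{\sum_i\min(j,e_i)}$ to recover the exponents. The only cosmetic difference is that you read off $\#\{i:e_i\ge j\}$ from the \emph{first} differences $a^{(p^{j})}_m/a^{(p^{j-1})}_m$, whereas the paper reads off $\#\{i:e_i=r\}$ directly from the \emph{second} differences $a^{(p^r)}_m{}^2/\bigl(a^{(p^{r-1})}_m a^{(p^{r+1})}_m\bigr)$; these are trivially equivalent.
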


\begin{proof} Fix $m\in\NN$, and write $G=A_m$. Choose an
isomorphism
\[
    G\simeq\bigoplus_{i=1}^{s} \ZZ / p_{i}^{\ell_{i}}\ZZ \ts ,
\]    
which exists by the elementary divisor theorem, with $s\in\NN$; note
that the primes $p_i$ need not be distinct.  Set $G(n)=A_{m}^{(n)}$
and $g(n)=a_{m}^{(n)}$, where $g(1)=1$.  In view of the
elementary divisor theorem, it now suffices to show that, for
each prime power $p^r$ with $r\ge 1$, the power of $p$ in
\[
       \frac{g(p^r) \, g(p^r)}{g(p^{r-1}) \, g(p^{r+1})} \, = \,
       \frac{[G(p^r) : G(p^{r-1})]}{[G(p^{r+1}) : G(p^r)]}
\]
equals the number of indices $i$ so that $p_{i} = p$ and
$\ell_{i} = r$. This follows from $g(p^r) = p^t$ where
\[
    t = \sum_{i \ts :\ts p_i = p} \min\ts (r,\ell_i) \ts .
\]
This uniquely specifies all elementary divisors.
\end{proof}

\section{Global versus local conjugacy and orbit statistics}
\label{sec-four}

The determinant and the trace of an integer matrix are not changed
under $\GL (2,\QQ)$ conjugacy.  But for matrices $M\in\Mat (2,\ZZ)$,
it has been known for a long time that the determinant and the trace
are neither a sufficient nor a maximal set of invariants (this goes
back to contributions by Latimer, MacDuffee, Taussky and Rademacher --
see \cite{T,ATW,Rade} and references therein). There are various ways
of deciding $\GL(2,\ZZ)$-conjugacy, amounting to exploiting a third
and final conjugacy invariant \cite{AO,ATW,BRcat,Rade}. It is clear
that there are many interesting connections to class groups and class
numbers of quadratic number fields, see \cite{T} for details.

\begin{example}\label{old-ex}
Consider the two $\GL (2,\ZZ)$-matrices
\[
     M = \begin{pmatrix} 3 & 10 \\ 1 & 3 \end{pmatrix}
     \quad \text{and} \quad
     M^{\ts\prime} = \begin{pmatrix} 3 & 5 \\ 2 & 3 \end{pmatrix},
\]
which share $D=-1$ and $T=9$. One can check explicitly that
the integral matrices $X$ which satisfy $MX=XM^{\ts\prime}$ are 
integral linear combinations of $A$ and $B$, where
\[
     A = \begin{pmatrix} 0 & 5 \\ 1 & 0 \end{pmatrix}
     \quad \text{and} \quad
     B = \begin{pmatrix} 2 & 0 \\ 0 & 1 \end{pmatrix}.
\]
For each integer $n>2$, one can find an $X$ with $\det (X)$ coprime
to $n$, so that the reductions of $M$ and $M^{\ts\prime}$ mod $n$
are $\Mat (2,\ZZ_n)^{\times}$-conjugate. However, when taken mod $5$, 
$\det (X)$ is always congruent to $0$, $2$ or $3$. Consequently, no
$X$ exists with $\det (X)=\pm 1$, whence $M$ and $M^{\ts\prime}$
are not conjugate in $\GL (2,\ZZ)$.
 \exend
\end{example}

Obviously, two $\GL (2,\ZZ)$-conjugate hyperbolic toral automorphisms
do possess the same dynamical zeta function on $\TT^2$, equivalently
have the same sequence of fixed point counts.  They also have the same
local statistics on all lattices $L_n$. The latter statement follows
from the observation that the conjugating $\GL (2,\ZZ)$-matrix element
leaves all $L_n$ invariant and induces a local conjugacy on all of
them. This is a particular case of a result for endomorphisms
(recall Remark~\ref{remark-localauto} and its preceding paragraph):
\begin{fact} \label{global-to-local}
   Let $n\ge 2$ be an integer.
   If two matrices $M,M^{\ts\prime} \in\Mat (2,\ZZ)$ are $\GL
   (2,\ZZ)$-conjugate, their reductions mod $n$ are
   $\Mat (2,\ZZ_n)^{\times}$-conjugate.
\end{fact}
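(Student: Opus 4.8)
The plan is to unwind the definition of $\GL(2,\ZZ)$-conjugacy and push it through reduction mod $n$, which is a ring homomorphism $\Mat(2,\ZZ)\to\Mat(2,\ZZ_n)$. Suppose $M,M^{\ts\prime}\in\Mat(2,\ZZ)$ are $\GL(2,\ZZ)$-conjugate, so there is a matrix $X\in\GL(2,\ZZ)$ with $X M X^{-1} = M^{\ts\prime}$, equivalently $X M = M^{\ts\prime} X$. First I would recall that $X\in\GL(2,\ZZ)$ means $\det(X)=\pm 1$, and that the entrywise reduction map mod $n$, which I will write $\overline{\;\cdot\;}$, is multiplicative and additive, so in particular $\overline{\det(X)} = \det(\overline{X})$ in $\ZZ_n$. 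Since $\det(X)=\pm 1$, its image $\det(\overline{X})$ equals $\pm 1$ in $\ZZ_n$, which is a unit; hence $\overline{X}\in\Mat(2,\ZZ_n)^{\times} = \GL(2,\ZZ_n)$. (Explicitly, the adjugate formula $X^{-1}=\det(X)^{-1}\,\mathrm{adj}(X)$ has integer entries, and its reduction is a two-sided inverse of $\overline{X}$ in $\Mat(2,\ZZ_n)$.)

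Next I would apply the homomorphism to the intertwining relation: from $X M = M^{\ts\prime} X$ in $\Mat(2,\ZZ)$, reducing mod $n$ gives $\overline{X}\,\overline{M} = \overline{M^{\ts\prime}}\,\overline{X}$ in $\Mat(2,\ZZ_n)$. Combining this with the invertibility of $\overline{X}$ just established, we get $\overline{X}\,\overline{M}\,\overline{X}^{-1} = \overline{M^{\ts\prime}}$, i.e. the reductions of $M$ and $M^{\ts\prime}$ mod $n$ are conjugate by an element of $\Mat(2,\ZZ_n)^{\times}$. That is exactly the assertion.

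Honestly, there is no real obstacle here: the statement is essentially the functoriality of conjugacy under the ring homomorphism $\ZZ\to\ZZ_n$, and the only point worth spelling out is that a matrix of determinant $\pm 1$ remains invertible after reduction because $\pm 1$ is a unit in $\ZZ_n$ for every $n$. (This is also the reason the more general hypothesis in Remark~\ref{remark-localauto} suffices: it is enough that $\det(X)$ be coprime to $n$, not literally $\pm 1$, but for $\GL(2,\ZZ)$-conjugacy we get the stronger $\det(X)=\pm 1$ for free.) If anything deserves a sentence of care, it is the implicit claim that $\Mat(2,\ZZ_n)^{\times}$ is the same thing as $\GL(2,\ZZ_n)$ in the sense used in the paper — but that was already fixed by the discussion preceding Remark~\ref{remark-localauto}, so I would simply cite it and conclude.
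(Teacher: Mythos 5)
Your proof is correct and follows essentially the same route as the paper's: reduce the conjugating matrix mod $n$, observe that its determinant $\pm 1$ remains a unit in $\ZZ_n$ so the reduction lies in $\Mat(2,\ZZ_n)^{\times}$, and push the conjugacy relation through the ring homomorphism. The paper states this more tersely, but the content is identical.
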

\begin{proof}
  By assumption, we have $M^{\ts\prime}=A\ts M A^{-1}$ for some $A\in
  \GL(2,\ZZ)$, which mod $n$ is turned into an equation of the same
  type within $\Mat (2,\ZZ_n)$, with $A\in \Mat (2,\ZZ_n)^{\times}$.
\end{proof}

Conversely, if two hyperbolic matrices $M, M^{\ts\prime}\in\Mat
(2,\ZZ)$ share the same fixed point counts on all lattices $L_n$, they
must also have the same fixed point counts on $\TT^2$. If $M,
M^{\ts\prime}\in\GL (2,\ZZ)$, Corollary~\ref{coro-on-zeta} implies
that they have the same determinant. They also have the same trace if
they are orientation-preserving (their traces may differ in sign if
they are orientation-reversing). Even in the orientation-preserving
case, the equivalence of local statistics for all $n$ does \emph{not}
imply $\GL(2,\ZZ)$-conjugacy. For instance, $M$ and
$M^{\ts\prime}=M^{-1}$ must have the same set of fixed point counts on
all lattices $L_n$, as $M^{-1}$ simply runs backwards through the
orbits of $M$. But a hyperbolic toral automorphism need not be
conjugate to its inverse:

\begin{example} \label{not-conjugate}
The two matrices
\begin{equation} \label{ex-non-rev}
   M \, = \, \begin{pmatrix} 4 & 9 \\ 7 & 16 \end{pmatrix}
   \quad \mbox{and} \quad
   M^{-1} \, = \, \begin{pmatrix} 16 & -9 \\ -7 & 4 \end{pmatrix} 
\end{equation}
with $D=1$, $T=20$ are \emph{not} conjugate within $\GL (2,\ZZ)$, as
one can check by an explicit calculation \cite{BRcat} (in fact, this
means they are not even topologically conjugate, see
\cite[Fact~1]{BRtorus}). Note that the companion matrix $C$ for both
$M$ and $M^{-1}$ is conjugate to its inverse:
\begin{equation} \label{comp}
   C \, = \, \begin{pmatrix} 0 & -1 \\ 1 & 20 \end{pmatrix}
   \quad \mbox{with inverse} \quad
   C^{-1} \, = \, \begin{pmatrix} 20 & 1 \\ -1 & 0 \end{pmatrix}
   \, = \, \begin{pmatrix} 0 & 1 \\ 1 & 0 \end{pmatrix}
   \begin{pmatrix} 0 & -1 \\ 1 & 20 \end{pmatrix} 
   \begin{pmatrix} 0 & 1 \\ 1 & 0 \end{pmatrix} .
\end{equation}
This means that $C$ and its inverse are in yet another $\GL (2,\ZZ)$
conjugacy class than $M$ and $M^{-1}$ (one can calculate that there
are altogether $5$ conjugacy classes for $D=1$ and $T=20$, compare
\cite[Ex.~17]{AO2}).  We thus see that the set of integer matrices
with the same local statistics on all lattices $L_n$ can encompass
more than one matrix conjugacy class on $\TT^2$.  \exend
\end{example}

Two conjugate matrices possess equivalent orbit structures, including
pretails of periodic orbits. In view of Remark~\ref{remark-one}, the
following local property is then clear.
\begin{fact}\label{conj-implies-stats}
  Let $n\in\NN$.
  When two integer matrices $M$ and $M^{\ts\prime}$ are
  $\Mat (2,\ZZ_{n})^{\times}$-conjugate, the corresponding toral
  endomorphisms have the same local
  statistics on the lattice $L_n$, in the sense of
  Definition~$\ref{defn-local}$.  \qed
\end{fact}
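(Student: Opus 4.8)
The plan is to extract the graph isomorphism directly from the conjugating matrix. By Remark~\ref{remark-one}, work on $\tilde L_n = \ZZ_n^2$ with $M$ and $M^{\ts\prime}$ acting by multiplication mod $n$. The hypothesis provides an $A\in\Mat(2,\ZZ_n)^{\times}$ with $M^{\ts\prime} \equiv A M A^{-1} \pmod n$, equivalently $M^{\ts\prime}A \equiv A M \pmod n$.

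First I would observe that left multiplication by $A$ defines a bijection $\phi\colon \tilde L_n \to \tilde L_n$, $x \mapsto Ax \bmod n$, precisely because $A$ is a unit in $\Mat(2,\ZZ_n)$, its inverse $A^{-1}$ supplying the inverse map. Next, the intertwining relation $M^{\ts\prime}A \equiv AM$ gives $\phi(Mx) = M^{\ts\prime}\phi(x)$ for every $x\in\tilde L_n$, so $\phi$ transports the dynamics of $M$ to that of $M^{\ts\prime}$.

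This is already the assertion. The directed pseudo-graph attached to $M$ has vertex set $\tilde L_n$ and, for each $x$, a directed edge from $x$ to $Mx$; under $\phi$ this edge is sent to the pair $(\phi(x),\phi(Mx)) = (\phi(x), M^{\ts\prime}\phi(x))$, which is exactly the edge of the $M^{\ts\prime}$-graph emanating from $\phi(x)$. Since $\phi$ is a vertex bijection, it is an isomorphism of directed pseudo-graphs, and Definition~\ref{defn-local} then yields the claim: when $M$ and $M^{\ts\prime}$ are invertible on $L_n$, the map $\phi$ descends to a conjugacy of the permutations $\pi^{(n)}_{M}$ and $\pi^{(n)}_{M^{\ts\prime}}$ in $S_{n^2}$, matching all periodic orbit counts; in the non-invertible case, $\phi$ additionally identifies the pretails attached to corresponding periodic orbits.

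There is essentially no obstacle here — the entire content is the single observation that the conjugator, being a unit mod $n$, acts bijectively on $\tilde L_n$ and intertwines the two endomorphisms, hence is automatically a graph isomorphism. The only point worth spelling out is that linearity of $\phi$ is not required by Definition~\ref{defn-local}; it simply comes for free and makes the conjugacy in $S_{n^2}$ explicit.
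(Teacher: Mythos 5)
Your proof is correct and spells out exactly the reasoning the paper leaves implicit (the paper merely notes that conjugate matrices have equivalent orbit structures and declares the fact ``clear''): the conjugating unit $A$ furnishes a bijection of $\tilde L_n$ that intertwines the two actions, hence an isomorphism of the associated functional digraphs. No gap; you take the same route, just with the details written out.
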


In the remainder of this section, we answer the question when two
integer matrices
\begin{equation} \label{two-matrices}
  M \, = \, \begin{pmatrix} a & b \\ c & d \\
    \end{pmatrix} \quad \text{and} \quad
  M{\ts}' \, = \, \begin{pmatrix} a{\ts}' & b{\ts}' \\ 
    c{\ts}' & d{\ts}' \\ \end{pmatrix}
\end{equation}
from $\Mat (2,\ZZ)$ are locally conjugate (mod $n$) for all $n$ and
hence possess the same local statistics (in the sense of
Definition~\ref{defn-local}) on all lattices $L_n$. This will only
depend on the determinant, the trace and a new invariant that we
introduce next.

\subsection{The matrix gcd}\label{sec:mgcd}
Consider a $2\!\times\! 2$-matrix $M$ as in \eqref{two-matrices}. 
\begin{defin}\label{def:mgcd}
   If $M\in\Mat (2,\ZZ)$, the quantity
\[
   \mgcd (M) \, := \, \gcd (b,c,d-a) \ts ,
\]
is called the \emph{matrix gcd} of $M$, or $\mgcd$ for
short\footnote{A generalisation of the mgcd to $n\!\times\! n$ integer
  matrices, $n \ge 2$, is the quantity $m$ of \cite{B}, which is used to
  describe the least normal subgroup in $\GL (n,\ZZ)$ containing a
  given element.}.  Here, we take the gcd to be a non-negative
integer, and set $\mgcd(M)=0$ when $b=c=d-a=0$.
\end{defin}
The last convention matches that of the ordinary gcd,
and is compatible with modular arithmetic. The
following consequence of this definition is obvious.

\begin{fact} \label{exclude}
  {}For $M\in\Mat (2,\ZZ)$, the following statements are equivalent:
\begin{itemize}
\item[{\rm (a)}] The matrix gcd satisfies $\mgcd(M)=0$.
\item[{\rm (b)}] $M=k\one$ for some $k\in\ZZ$.
\item[{\rm (c)}] The minimal polynomial of $M$ is of degree $1$.
\end{itemize}
Consequently, whenever $\mgcd(M)=r\in\NN$, $M$ cannot be a multiple
of the identity, and its characteristic and minimal polynomials
coincide.                \qed
\end{fact}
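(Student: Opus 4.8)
The plan is to verify the three-way equivalence by proving the cycle of implications (a)$\Rightarrow$(b)$\Rightarrow$(c)$\Rightarrow$(a), each of which is elementary. First, for (a)$\Rightarrow$(b): if $\mgcd(M)=\gcd(b,c,d-a)=0$, then by the convention recorded just before Fact~\ref{exclude} this forces $b=c=d-a=0$ simultaneously, so $b=c=0$ and $a=d$, which says precisely that $M=a\one$ with $a\in\ZZ$; take $k=a$. Conversely, for the first half of (b)$\Rightarrow$(a), if $M=k\one$ then $b=c=0$ and $d-a=0$, so $\gcd(b,c,d-a)=\gcd(0,0,0)=0$ by the same convention. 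So (a) and (b) are trivially interchangeable; the only content is the link to (c).

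For (b)$\Rightarrow$(c): if $M=k\one$, then $M-k\one=0$, so the linear polynomial $X-k$ annihilates $M$; since no nonzero constant polynomial can annihilate a nonzero matrix, the minimal polynomial is exactly $X-k$, which has degree $1$. For (c)$\Rightarrow$(b): if the minimal polynomial has degree $1$, write it as $X-k$ for some $k$ (it is monic); then $M-k\one=0$, i.e.\ $M=k\one$. This closes the cycle, so (a), (b), (c) are equivalent.

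The concluding sentence of Fact~\ref{exclude} is then immediate: if $\mgcd(M)=r\in\NN$, i.e.\ $r\neq 0$, then (a) fails, hence (b) fails, so $M$ is not a scalar matrix; and (c) fails, meaning the minimal polynomial has degree at least $2$. For a $2\!\times\!2$ matrix the minimal polynomial divides the characteristic polynomial, which has degree $2$, so the minimal polynomial has degree exactly $2$ and therefore equals the characteristic polynomial. One small point worth spelling out is the base-field subtlety: for an integer (or complex) $2\!\times\!2$ matrix, ``minimal polynomial of degree $1$'' and ``scalar matrix'' coincide regardless of whether one works over $\QQ$, $\RR$ or $\CC$, since a degree-one monic polynomial is $X-k$ and the relation $M=k\one$ forces $k\in\ZZ$ when $M$ is integral; so no ambiguity arises.

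There is essentially no obstacle here: every implication is a one-line consequence of definitions plus the standard fact that for $2\!\times\!2$ matrices the minimal polynomial divides the degree-$2$ characteristic polynomial. The only thing requiring the slightest care is keeping the degenerate gcd convention ($\gcd(0,0,0)=0$) consistent throughout, which is exactly why the authors flagged it in the sentence preceding the statement.
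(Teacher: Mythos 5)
Your proof is correct, and since the paper states this Fact with a terminal \qed and no written argument (treating it as an obvious consequence of Definition~\ref{def:mgcd} and the gcd convention), your write-up simply supplies the elementary details the authors omitted: (a)$\Leftrightarrow$(b) from the $\gcd(0,0,0)=0$ convention, (b)$\Leftrightarrow$(c) from the characterisation of scalar matrices by degree-one minimal polynomials, and the closing sentence from the fact that for a $2\!\times\!2$ matrix the minimal polynomial divides the degree-$2$ characteristic polynomial, so degree $\ge 2$ forces equality. This is the natural (and essentially only) route, and your remark about base-field independence, while not strictly needed, is a harmless clarification.
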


There are various other useful properties of the matrix gcd. 
It is immediate that
\begin{equation} \label{gcd-on-transneg}
\mgcd (-M) \, = \, \mgcd (M) \, = \, \mgcd (M^t)
\end{equation}
holds for arbitrary $M\in\Mat (2,\ZZ)$. Moreover,
with $k\in\ZZ$, one has 
\begin{equation} \label{more}
  \mgcd(k M) \, = \, k\ts\mgcd (M) \quad \text{and} \quad 
 \mgcd (M+k\ts\one) \, = \, \mgcd (M)\ts .
\end{equation}
Finally, if $M$ is invertible, its inverse is
$M^{-1} = \frac{1}{\det (M)} \left(\begin{smallmatrix} d & -b \\ -c &
a \end{smallmatrix} \right)$. Consequently, for all
matrices $M\in\GL (2,\ZZ)$, one has the relation
\begin{equation} \label{gcd-on-inverse}
   \mgcd (M^{-1}) \, = \, \mgcd (M)
\end{equation}
in addition to \eqref{gcd-on-transneg}.

Most significantly, the matrix gcd satisfies the following invariance
property, which can be seen as a consequence of the close relationship
to the theory of binary quadratic forms \cite{Z}.
\begin{lemma} \label{mat-gcd}
    If $M,M^{\ts\prime}\in\Mat (2,\ZZ)$ are two integer matrices that
    are conjugate via a $\GL (2,\ZZ)$-matrix, one has
    $\mgcd (M^{\ts\prime}) = \mgcd (M)$. In particular, the matrix gcd 
    of Definition~$\ref{def:mgcd}$ is constant on the conjugacy classes
    of\/ $\GL (2,\ZZ)$.
\end{lemma}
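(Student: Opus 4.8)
The plan is to show that if $M' = A M A^{-1}$ with $A \in \GL(2,\ZZ)$, then $\mgcd(M') = \mgcd(M)$. The cleanest route is to exploit the generators of $\GL(2,\ZZ)$: the group is generated by the elementary matrices $E_1 = \left(\begin{smallmatrix} 1 & 1 \\ 0 & 1 \end{smallmatrix}\right)$, $E_2 = \left(\begin{smallmatrix} 1 & 0 \\ 1 & 1 \end{smallmatrix}\right)$, together with $\left(\begin{smallmatrix} 0 & 1 \\ 1 & 0 \end{smallmatrix}\right)$ and $-\one$ (or, equivalently, by $S = \left(\begin{smallmatrix} 0 & -1 \\ 1 & 0 \end{smallmatrix}\right)$ and $E_1$). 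Since $\mgcd$ is already known to be invariant under $M \mapsto -M$ and under transposition by \eqref{gcd-on-transneg}, and conjugation by $\left(\begin{smallmatrix} 0 & 1 \\ 1 & 0 \end{smallmatrix}\right)$ sends $\left(\begin{smallmatrix} a & b \\ c & d \end{smallmatrix}\right)$ to $\left(\begin{smallmatrix} d & c \\ b & a \end{smallmatrix}\right)$ — which visibly has the same $\gcd(b,c,d-a)$ — it suffices to check invariance under conjugation by $E_1$ and $E_2$.

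First I would compute directly: conjugating $M = \left(\begin{smallmatrix} a & b \\ c & d \end{smallmatrix}\right)$ by $E_1$ gives
\[
   E_1 M E_1^{-1} \, = \, \begin{pmatrix} a+c & b+d-a-c \\ c & d-c \end{pmatrix},
\]
whose off-diagonal and diagonal-difference entries are $b' = b+d-a-c$, $c' = c$, and $d'-a' = (d-c)-(a+c) = d-a-2c$. Now $\gcd(b', c', d'-a') = \gcd(b+d-a-c,\, c,\, d-a-2c)$. Reducing modulo $c$, this $\gcd$ divides $\gcd(b+d-a,\, c,\, d-a)$, and since $b+d-a - (d-a) = b$ we get that it divides $\gcd(b,c,d-a) = \mgcd(M)$; the reverse divisibility is symmetric (conjugating back by $E_1^{-1}$), so the two gcds are equal. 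The computation for $E_2$ is entirely analogous (or follows from the $E_1$ case by transposition, using \eqref{gcd-on-transneg}).

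Assembling these pieces: every $A \in \GL(2,\ZZ)$ is a product of the generators above, so $\mgcd(AMA^{-1}) = \mgcd(M)$ follows by induction on word length, each step being one of the invariances just established. The ``in particular'' clause is then immediate, since conjugacy classes in $\GL(2,\ZZ)$ are exactly the orbits under $M \mapsto AMA^{-1}$.

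**The main obstacle** is really just bookkeeping: one must be slightly careful that the intermediate quantities like $b+d-a-c$ are manipulated correctly and that the ``divides in both directions'' argument is spelled out rather than asserted, since a single gcd computation only gives one inequality. An alternative, more conceptual proof invokes the stated connection to binary quadratic forms: a non-scalar integer matrix $M$ with characteristic polynomial $X^2 - TX + D$ corresponds (via its action on a rank-two lattice) to an ideal class for the order of discriminant $T^2 - 4D$, and $\mgcd(M)$ records the largest $r$ such that $M$ is congruent mod $r$ to a scalar — equivalently, the ``content'' of the associated form — which is manifestly a class invariant. I would mention this as a remark but carry out the elementary generator computation as the actual proof, since it is self-contained and uses only facts already in the paper.
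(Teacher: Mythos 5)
Your proof is correct, but it takes a genuinely different route from the paper's. You reduce to a generating set for $\GL(2,\ZZ)$ and verify invariance of $\mgcd$ under conjugation by each generator ($E_1$, its transpose $E_2$, the swap $\left(\begin{smallmatrix}0&1\\1&0\end{smallmatrix}\right)$, and $-\one$), then induct on word length. The paper instead works with a single general $A=\left(\begin{smallmatrix}\alpha&\beta\\\gamma&\delta\end{smallmatrix}\right)\in\GL(2,\ZZ)$ and computes directly that the triple $(b',c',d'-a')$ is obtained from $(b,c,d-a)$ by an explicit $3\times3$ integer matrix $N(A)$ with $\det N(A)=1$; the divisibility in both directions then follows at once because $N(A)\in\GL(3,\ZZ)$. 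What the paper's computation buys is a self-contained, uniform argument that does not invoke a presentation of $\GL(2,\ZZ)$, and it implicitly exhibits the (second symmetric power) representation $A\mapsto N(A)$, which makes the two-sided divisibility transparent in one stroke. What your generator argument buys is smaller, more checkable computations and the nice observation that the $E_2$ case is subsumed by transposition via \eqref{gcd-on-transneg}. One small slip worth flagging: your parenthetical claim that $\GL(2,\ZZ)$ is generated by $S=\left(\begin{smallmatrix}0&-1\\1&0\end{smallmatrix}\right)$ together with $E_1$ is wrong — both have determinant $1$, so they generate only $\SL(2,\ZZ)$; you need to retain a determinant $-1$ element such as $\left(\begin{smallmatrix}0&1\\1&0\end{smallmatrix}\right)$, as in your primary list. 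Your main argument uses the correct generating set, so this does not affect the proof, but the parenthetical should be deleted or corrected.
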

\begin{proof}
  Assume $M^{\ts\prime} = A\ts M A^{-1}$ with
  $A=\left(\begin{smallmatrix} \alpha & \beta \\ \gamma & \delta
    \end{smallmatrix}\right) \in\GL (2,\ZZ)$. One can check by a
  straight-forward calculation that this implies the linear equation
\[
    \begin{pmatrix} b{\ts}' \\ c{\ts}' \\ d{\ts}'-a{\ts}' 
    \end{pmatrix} \, = \;
    \frac{1}{\det (A)} \begin{pmatrix}
    \alpha^2 & -\beta^2 & \alpha\beta \\
    -\gamma^2 & \delta^2 & -\gamma\delta \\
    2\alpha\gamma & -2\beta\delta & \beta\gamma + \alpha\delta
    \end{pmatrix} \begin{pmatrix} b \\ c \\ d-a\end{pmatrix}
    \, =: \, N \begin{pmatrix} b \\ c \\ d-a\end{pmatrix} ,
\]
where $N=N(A)$ is an integer matrix because $\det(A)=\pm 1$. As the
primed quantities are then integer linear combinations of the unprimed
ones, $\gcd (b,c,d-a)$ divides $\gcd
(b{\ts}',c{\ts}',d{\ts}'-a{\ts}')$. It is easy to verify that $\det
(N)=1$, so that $N\in\GL (3,\ZZ)$ and $N^{-1}$ is an integer matrix as
well (this can also be seen directly from observing that $M=A^{-1}
M{\ts}' A$ and hence $N(A^{-1}) = (N(A))^{-1}$).  Consequently, our
previous argument implies that $\gcd
(b{\ts}',c{\ts}',d{\ts}'-a{\ts}')$ divides $\gcd (b,c,d-a)$ as well.
Within $\NN$, we thus obtain $\gcd (b{\ts}',c{\ts}',d{\ts}'-a{\ts}') =
\gcd (b,c,d-a)$ as stated.  This conclusion also holds when one gcd
(and then also the other) vanishes, adopting the usual convention that
$0\ts | \ts 0$.  The second claim is now obvious.
\end{proof}

\begin{remark} \label{mgcd-global}
  Two integer matrices with different $\mgcd$ cannot be $\GL
  (2,\ZZ)$-conjugate. Note, however, that $M$, $M^{-1}$ and $C$ of
  Example~\ref{not-conjugate} all have $\mgcd=1$, but are in distinct
  $\GL (2,\ZZ)$ conjugacy classes as discussed.  \exend
\end{remark}

\subsection{Local conjugacies and a binary quadratic form}

With a view to determining conjugacies on $L_n$, meaning conjugacy
via $\Mat (2,\ZZ_n)^{\times}$, consider the integer matrices
\begin{equation} \label{mat-defs}
    M \, = \, \begin{pmatrix} a & b \\ c & d \end{pmatrix}
    \quad \text{and} \quad
    C \, = \, \begin{pmatrix} 0 & -D \\ 1 & T \end{pmatrix}
\end{equation}
with $D=\det (M)$ and $T=\trace (M)$. Here, $C$ is the standard
companion matrix for the characteristic polynomial 
\begin{equation} \label{charpoly}
      x^2 - T\ts x + D
\end{equation}
of the matrix $M$. Let us assume that $M$ is \emph{not} a multiple of
$\one$.  To investigate possible conjugacies, let
$A=\left(\begin{smallmatrix} \alpha & \beta \\ \gamma &
    \delta\end{smallmatrix}\right)$ be another integer matrix.  It is
easy to check that
\begin{equation} \label{fund-cond}
  M A \, = \, A\ts C \quad \Longleftrightarrow \quad
  \begin{pmatrix} \beta \\ \delta \end{pmatrix}
  \, = \, M \begin{pmatrix} \alpha \\ \gamma \end{pmatrix} .
\end{equation}
Whenever this semi-conjugacy holds, one has
\begin{equation} \label{det-form}
  \det (A) \, = \, c\,\alpha^2 + (d-a)\ts \alpha\gamma
                   - b\,\gamma^2
           \, =: \, Q^{}_{M} (\alpha,\gamma)\ts ,
\end{equation}
which brings us in contact with the classic theory of binary quadratic
forms. 

In fact, the quadratic form $Q^{}_{M}$ is the key for a hierarchy of
conjugacies.  Clearly, $M$ and $C$ are $\GL (2,\QQ)$-conjugate if
$Q^{}_{M}$ represents \emph{some} $\mu\neq 0$.  This is true unless
$\mgcd (M)=0$, which relates back to Fact~\ref{exclude}.  Whenever
$Q^{}_{M}$ represents $1$ or $-1$ (which then automatically implies
$\alpha$ and $\beta$ to be coprime), the matrices $M$ and $C$ are $\GL
(2,\ZZ)$-conjugate (this is one of the approaches to $\GL
(2,\ZZ)$-conjugacy taken in \cite{ATW,BRcat}).  When $Q^{}_{M}$
represents some $\mu\in\NN$, and when $n\in\NN$ is another integer
that is relatively prime with $\mu$, one has $\mu\in\ZZ_{n}^{\times}$,
wherefore $A$ is invertible in $\Mat(2,\ZZ_n)$, compare
Remark~\ref{remark-localauto}, and the reductions of $M$ and $C$ mod
$n$ are $\Mat (2,\ZZ_n)^{\times}$-conjugate.

The discriminant of the binary quadratic form $Q^{}_{M}$ of
\eqref{det-form} is
\begin{equation} \label{qdiscrim}
\varDelta = (d-a)^2 + 4 bc = (a+d)^2 - 4 (ad-bc) = T^2 - 4D.
\end{equation} 
The form $Q^{}_{M}$ is called \emph{primitive} when $\gcd(b,c,d-a)=1$,
which means $\mgcd (M)=1$. Moreover, a representation $k=Q^{}_{M}
(\alpha,\gamma)$ is called primitive (or proper) when $\gcd
(\alpha,\gamma)=1$.  We need the following fundamental result from the
theory of binary quadratic forms, compare \cite[Prop.~4.2]{Buell}.
\begin{fact} \label{prim-rep} 
  Let $n\in\NN$ be arbitrary, but fixed.  If the binary quadratic form
  $Q^{}_{M}$ is primitive, it can primitively represent some integer $k$
  that is relatively prime to $n$.  \qed
\end{fact}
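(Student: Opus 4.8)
The plan is to localise the problem: for each prime $p$ dividing $n$ I will produce a residue class of vectors at which $Q^{}_{M}$ does not vanish modulo $p$, combine these classes by the Chinese Remainder Theorem into a single integer vector $(\alpha_0,\gamma_0)$ with $Q^{}_{M}(\alpha_0,\gamma_0)$ coprime to $n$, and finally divide $(\alpha_0,\gamma_0)$ by its content to obtain a \emph{primitive} representation of an integer still coprime to $n$.

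First I would fix a prime $p\mid n$ and reduce $Q^{}_{M}$ modulo $p$. Since $Q^{}_{M}$ is primitive, $\gcd(b,c,d-a)=1$, so $p$ does not divide all three coefficients and the reduced form is not identically zero over $\FF_p$. I then observe that evaluating $Q^{}_{M}$ at the three vectors $(1,0)$, $(0,1)$, $(1,1)$ yields $c$, $-b$ and $c+(d-a)-b$; if all three were divisible by $p$ we would get $p\mid b,c,d-a$, a contradiction. Hence at least one of these three vectors, call it $v_p$, satisfies $Q^{}_{M}(v_p)\not\equiv 0\pmod p$, and such a $v_p$ is automatically $\not\equiv(0,0)\pmod p$. (Handling $p=2$ this way avoids any root-counting argument.)

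Next I would apply the Chinese Remainder Theorem coordinate by coordinate to find $(\alpha_0,\gamma_0)\in\ZZ^2$ with $(\alpha_0,\gamma_0)\equiv v_p\pmod p$ for every prime $p\mid n$ (for $n=1$ one simply takes any primitive vector at which $Q^{}_{M}$ does not vanish, which exists by primitivity). For each $p\mid n$ one then has $Q^{}_{M}(\alpha_0,\gamma_0)\equiv Q^{}_{M}(v_p)\not\equiv0\pmod p$, so $\gcd\big(Q^{}_{M}(\alpha_0,\gamma_0),n\big)=1$; in particular $(\alpha_0,\gamma_0)\neq(0,0)$. Setting $e=\gcd(\alpha_0,\gamma_0)\ge 1$ and $(\alpha,\gamma)=(\alpha_0/e,\gamma_0/e)$, the vector $(\alpha,\gamma)$ is primitive, and because $Q^{}_{M}$ is a quadratic form we have $e^2\mid Q^{}_{M}(\alpha_0,\gamma_0)$, so $k:=Q^{}_{M}(\alpha,\gamma)=Q^{}_{M}(\alpha_0,\gamma_0)/e^2$ is an integer dividing $Q^{}_{M}(\alpha_0,\gamma_0)$. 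As a divisor of an integer coprime to $n$, $k$ is itself coprime to $n$, and $(\alpha,\gamma)$ primitively represents it.

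I expect the only genuinely delicate points to be the two places where a naive argument fails: the non-vanishing of a nonzero binary quadratic form over $\FF_p$, which at $p=2$ must be verified directly on the nonzero points of $\FF_2^2$ rather than by counting roots of a univariate polynomial; and the passage from an arbitrary representation to a primitive one, which is the reason for the final division by the content $e$ and relies on the homogeneity (degree two) of $Q^{}_{M}$.
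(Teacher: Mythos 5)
Your proof is correct, and it takes a genuinely different route from the paper's. The paper's own argument is a one-line explicit construction: take $\alpha$ to be the product of the primes dividing $n$ but not $b$, and $\gamma$ the product of those dividing $n$ and $b$ but not $c$; for each prime $p\mid n$, a short case analysis on whether $p$ divides $b$, $c$, $d-a$ (using primitivity in the last case) shows $p\nmid Q^{}_{M}(\alpha,\gamma)$, and $\gcd(\alpha,\gamma)=1$ holds automatically since the two products run over disjoint sets of primes, so the representation is primitive from the start. You instead localise by testing $Q^{}_{M}$ at $(1,0)$, $(0,1)$, $(1,1)$ modulo each $p\mid n$, globalise with the Chinese Remainder Theorem, and then repair primitivity by dividing out the content $e$ of $(\alpha_0,\gamma_0)$, using the degree-two homogeneity of $Q^{}_{M}$ to get $e^2\mid Q^{}_{M}(\alpha_0,\gamma_0)$ and hence that the quotient $k$ is still coprime to $n$. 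Your version is slightly longer but more systematic and more obviously generalisable; the content-division step is genuinely necessary in your setup, since the CRT vector need not have coprime coordinates, whereas the paper's ad hoc $\alpha,\gamma$ sidestep it. Both arguments rest on the same underlying idea of reducing to a local non-vanishing statement at each prime dividing $n$.
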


Indeed, when $\alpha$ is the product of all primes that divide $n$,
but not $b$, and $\gamma$ the product of all primes that divide $n$
and $b$, but not $c$, one sees that $k=Q^{}_{M} (\alpha,\gamma)$ is an
integer that is relatively prime to $n$.

\begin{remark} \label{eigenvalue-discrim} When the quadratic form
  $Q^{}_{M}$ fails to be primitive, its discriminant $\!\varDelta$ is
  the product of $(\mgcd(M))^2$ with the discriminant $\!\varDelta'$
  of the `primitive part' of $Q^{}_{M}$. Using \eqref{qdiscrim}, this
  relates to the following property of the eigenvalues
  $\lambda^{}_{\pm}$ of $M$, which are the roots of \eqref{charpoly}:
\[
  \lambda^{}_{\pm} \, = \, \frac{1}{2}\, \bigl(T \pm \sqrt{T^2-4D}\,\bigr) 
  \, = \, \frac{1}{2}\, \bigl(T \pm \sqrt{\!\varDelta}\,\bigr) \, = \, 
  \frac{1}{2}\, \bigl(T \pm \mgcd(M) \sqrt{\!\varDelta'}\,\bigr) \, = \,
   \frac{1}{2}\, \bigl(T \pm S \sqrt{\!\varDelta''}\,\bigr).
\]   
In the last equality, $\ts\!\varDelta''$ is the square free part of
$\varDelta$, highlighting that $\mgcd(M)\ts |\ts S$. In particular,
$\mgcd(M)=1$ when $\ts\!\varDelta$ itself is square free.  The
approach of \cite{PV} to studying the dynamics of $\GL (2,\ZZ)$
matrices on $L_n$ involves relating it to the multiplication of the
eigenvalue $\lambda_{+}$ on an associated ideal. The latter is an
order in a quadratic number field, but generally not its maximal
order; see \cite[Ch.~2]{BS} and \cite{Z} for details on the connection
between quadratic forms and orders in number fields.  \exend
\end{remark}

\subsection{Complete determination of local conjugacy}

We now give a complete description of the local conjugacy problem, in
the sense of Definition~\ref{defn-local}. This makes use of a
particular normal form over the ring $\ZZ_{n}$, with a sequence of
elementary arguments.

\begin{prop} \label{prop-4}
  Let $M\in\Mat (2,\ZZ)$ be a matrix with $\mgcd (M)=\mu \neq 0$, and
  let $C$ be the corresponding companion matrix, as in
  $\eqref{mat-defs}$.  Then, for all integers $n\ge 2$ that are
  relatively prime with $\mu$, the reductions of the matrices $M$ and
  $C$ mod $n$ are $\Mat (2,\ZZ_n)^{\times}$-conjugate. In this case,
  $M$ and $C$ share the same local statistics on $L_n$.
\end{prop}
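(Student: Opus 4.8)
The plan is to show that, under the stated hypotheses, the binary quadratic form $Q^{}_{M}$ of \eqref{det-form} primitively represents some integer $k$ coprime to $n$, and then to turn this representation into an explicit conjugating matrix in $\Mat(2,\ZZ_n)^{\times}$. First I would note that $\mgcd(M)=\mu\neq 0$ together with $\gcd(n,\mu)=1$ means the form $Q^{}_{M}$, after dividing its coefficients by $\mu$, is primitive, and the resulting primitive form $Q^{}_{M}/\mu$ has the same behaviour modulo $n$ as $Q^{}_{M}$ up to the unit factor $\mu\in\ZZ_n^{\times}$. Invoking Fact~\ref{prim-rep} (and the explicit choice of $\alpha,\gamma$ described right after it), the primitive form represents some $k'$ coprime to $n$; hence $Q^{}_{M}(\alpha,\gamma)=\mu k'=:k$ is also coprime to $n$, with $\gcd(\alpha,\gamma)=1$.

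Next I would build the conjugating matrix. Using the equivalence \eqref{fund-cond}, set the first column of $A$ to be $(\alpha,\gamma)^{t}$ and the second column to be $M(\alpha,\gamma)^{t}=(\beta,\delta)^{t}$; this forces $MA=AC$ by construction, and by \eqref{det-form} we have $\det(A)=Q^{}_{M}(\alpha,\gamma)=k\in\ZZ_n^{\times}$. Therefore $A$ reduces to an element of $\Mat(2,\ZZ_n)^{\times}$, and the relation $MA=AC$ reduced mod $n$ gives $A^{-1}MA\equiv C\pmod n$, i.e. the reductions of $M$ and $C$ mod $n$ are $\Mat(2,\ZZ_n)^{\times}$-conjugate. (One should note that $M$ is not a multiple of $\one$ here, by Fact~\ref{exclude}, so the setup leading to \eqref{fund-cond}--\eqref{det-form} applies, and $C$ is genuinely the companion matrix of the common characteristic polynomial \eqref{charpoly}.)

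Finally, the statement about local statistics follows directly from Fact~\ref{conj-implies-stats}: since the reductions of $M$ and $C$ mod $n$ are $\Mat(2,\ZZ_n)^{\times}$-conjugate, the corresponding toral endomorphisms induce isomorphic directed pseudo-graphs on $L_n$, which is precisely the assertion that they share the same local statistics on $L_n$ in the sense of Definition~\ref{defn-local}. I expect the only delicate point to be the bookkeeping with the non-primitive case: one must check carefully that scaling $Q^{}_{M}$ by $\mu$ is harmless modulo $n$ (this is exactly where $\gcd(n,\mu)=1$ is used) and that the representing pair $(\alpha,\gamma)$ can indeed be chosen with $\gcd(\alpha,\gamma)=1$ so that the resulting $A$ has the right reduction; everything else is the routine verification that column-filling via $M$ produces a matrix satisfying $MA=AC$ with determinant given by $Q^{}_{M}$.
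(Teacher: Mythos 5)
Your proof is correct and follows essentially the same route as the paper: pass to the primitive form $Q^{}_{M}/\mu$, invoke Fact~\ref{prim-rep} to get a primitive representation of an integer coprime to $n$, multiply back by $\mu$ (using $\gcd(n,\mu)=1$) to get a conjugating matrix $A$ satisfying $MA=AC$ with $\det(A)\in\ZZ_{n}^{\times}$, and finish with Fact~\ref{conj-implies-stats}. The only differences are cosmetic: you spell out the column construction of $A$ from \eqref{fund-cond} and the appeal to Fact~\ref{exclude}, both of which the paper leaves implicit.
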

\begin{proof}
  With $\mgcd (M)=\mu$, the integer quadratic form
  $\frac{1}{\mu}Q^{}_{M}$ is primitive.  By Fact~\ref{prim-rep}, we
  can thus find $\alpha,\beta\in\ZZ$ with $\gcd (\alpha,\beta)=1$ and
  $\frac{1}{\mu}Q^{}_{M} (\alpha,\beta)=k$ where $k$ is an integer
  relatively prime with $n$. So, $Q^{}_{M} (\alpha,\beta)=k\mu$, which
  is still relatively prime with $n$ by assumption.

  This means that there is a matrix $A\in\Mat (2,\ZZ)$ with $\det
  (A)=k\mu$ whose reduction mod $n$ is an element of $\Mat
  (2,\ZZ_n)^{\times}$. Consequently, it defines a conjugacy of $M$ and
  $C$ on $\tilde{L}_n$, again when taking all matrices mod $n$.  The
  final claim is clear from Fact~\ref{conj-implies-stats}.
\end{proof}

Note that we need not consider the trivial lattice, $L_1 = \{0\}$, as
the point $x=0$ is fixed by all matrices.

\begin{example}\label{sl-versus-gl}
  The conjugation in Proposition~\ref{prop-4} need not be
  $\SL(2,\ZZ_n)$-conjugacy, that is, the conjugating element need not
  have determinant $1$ (mod $n$). To illustrate this, consider the
  matrices
\[
  M = \begin{pmatrix} 2 & 3 \\ 2 & 2 \end{pmatrix}
  \quad \text{and} \quad
  C = \begin{pmatrix} 0 & 2 \\ 1 & 4 \end{pmatrix},
\]
with $D=-2$, $T=4$ and, from \eqref{det-form}, quadratic form
$Q^{}_{M} (\alpha,\gamma) = 2\alpha^2 - 3\gamma^2$ (primitive since
$\mgcd (M) =1$). Over $\ZZ^{}_{3}$, the quadratic form reduces to
$2\alpha^2$, which can represent $0$ and $2$, but never $1$, so that
$M$ and $C$ are not conjugate over $\SL (2,\ZZ^{}_3)$.  This example
highlights that the approach of \cite{AO2} is of limited value for our
problem.
\exend
\end{example}

The following result is obvious from Proposition~\ref{prop-4}, because
$\Mat (2,\ZZ_n)^{\times}$-conjugacy induces a graph isomorphism in the
sense of Definition~\ref{defn-local}, compare
Fact~\ref{conj-implies-stats}.

\begin{coro}\label{same-local-stats}
  Let $M,M^{\ts\prime}$ be two integer matrices with the same trace
  and determinant, whence they share the same companion matrix. If, in
  addition, $\mgcd (M^{\ts\prime}) = \mgcd (M) =1$, the toral
  endomorphisms defined by $M$ and $M^{\ts\prime}$ have the same local
  statistics on all lattices $L_n$.  \qed
\end{coro}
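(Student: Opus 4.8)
The plan is to deduce the corollary directly from Proposition~\ref{prop-4} by a transitivity argument over each fixed modulus $n$. Since $\mgcd (M) = \mgcd (M^{\ts\prime}) = 1$, the relevant modulus $\mu$ in Proposition~\ref{prop-4} equals $1$ in both cases, so the hypothesis that $n$ be relatively prime with $\mu$ is automatically fulfilled for every $n \ge 2$. Note also that, since $\mgcd (M) = 1 \neq 0$, neither $M$ nor $M^{\ts\prime}$ is a multiple of the identity by Fact~\ref{exclude}, so Proposition~\ref{prop-4} genuinely applies to both.

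First I would apply Proposition~\ref{prop-4} to $M$: for every $n \ge 2$, the reduction of $M$ mod $n$ is $\Mat (2,\ZZ_n)^{\times}$-conjugate to the reduction of the companion matrix $C$ of \eqref{mat-defs}, which depends only on $D = \det (M)$ and $T = \trace (M)$ and is therefore the \emph{same} matrix for $M$ and $M^{\ts\prime}$ by hypothesis. Applying the same proposition to $M^{\ts\prime}$ gives that the reduction of $M^{\ts\prime}$ mod $n$ is likewise $\Mat (2,\ZZ_n)^{\times}$-conjugate to that of $C$.

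Next, since conjugacy inside the group $\Mat (2,\ZZ_n)^{\times}$ is an equivalence relation, composing these two conjugacies shows that the reductions of $M$ and $M^{\ts\prime}$ mod $n$ are $\Mat (2,\ZZ_n)^{\times}$-conjugate to each other, for every $n \ge 2$. By Fact~\ref{conj-implies-stats}, this forces $M$ and $M^{\ts\prime}$ to have the same local statistics on $L_n$, in the sense of Definition~\ref{defn-local}. Finally, the remaining case $n = 1$ is handled trivially: $L_1 = \{0\}$ and $x = 0$ is fixed by every integer matrix, so the two induced pseudo-graphs each consist of a single vertex with a loop. Together these cover all $n \in \NN$ and complete the argument.

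There is essentially no obstacle here; the corollary is immediate once Proposition~\ref{prop-4} is in place. The only points needing a word of care are that $M$ and $M^{\ts\prime}$ really do share the companion matrix $C$ (immediate from the equality of trace and determinant) and that the degenerate lattice $L_1$, formally excluded in Proposition~\ref{prop-4}, is dealt with separately but trivially.
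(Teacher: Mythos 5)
Your argument is correct and is essentially the same as the paper's: apply Proposition~\ref{prop-4} to conjugate both $M$ and $M^{\ts\prime}$ to the common companion matrix $C$ mod $n$, use transitivity, and invoke Fact~\ref{conj-implies-stats}. The paper states this one-line justification before the corollary; you have merely spelled out the transitivity step, the applicability of Proposition~\ref{prop-4} when $\mgcd = 1$, and the trivial case $n = 1$, all of which are fine.
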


\begin{example}\label{not-conjugate-more}
Let us briefly return to the matrices $M$, $M^{-1}$ and $C$ of 
Example~\ref{not-conjugate}. They all have $\mgcd=1$, but are in
different $\GL (2,\ZZ)$ conjugacy classes. Nevertheless,
Corollary~\ref{same-local-stats} shows they all give the same
local statistics, on all lattices $L_n$.   
The same conclusion applies to the matrices $M$ and $M^{\ts\prime}$
of Example~\ref{old-ex}.
\exend
\end{example}

Let us proceed to the general case when $\mgcd (M)=r \ge 1$ from
the point of view of local conjugacies.  
Consider the matrix $M$ from \eqref{two-matrices}, and let $r=\mgcd
(M)$. We may now decompose $M$ as
\begin{equation} \label{decomposition}
   M \, = \, a\ts\one + r\ts N
   \quad \text{with} \quad N \, = \, \begin{pmatrix}
    0 & \tilde{b} \\ \tilde{c} & (d-a)^{\up} \end{pmatrix}.
\end{equation}
When $r=0$, we have the trivial case $M=a\ts\one$, which we
now put aside by assuming $r\in\NN$, in line with Fact~\ref{exclude},
so that $\mgcd (N)=1$ by construction.

\begin{prop} \label{normal-form}
  Let $M=\left(\begin{smallmatrix}a&b\\c&d\end{smallmatrix}\right)$ be
  an integer matrix with $\mgcd (M)=r\neq 0$. Then, for all integers
  $n\ge 2$, and when taking all matrices mod $n$, $M$ is $\Mat
  (2,\ZZ_n)^{\times}$-conjugate to the integer matrix
  $\left(\begin{smallmatrix}a & bc/r\\r & d\end{smallmatrix}\right)$.
\end{prop}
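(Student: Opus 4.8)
The plan is to find, for each $n\ge 2$, a matrix $A \in \Mat(2,\ZZ)$ whose reduction mod $n$ lies in $\Mat(2,\ZZ_n)^{\times}$ and which conjugates $M$ into the target matrix $M^{\ts\prime} = \left(\begin{smallmatrix}a & bc/r\\ r & d\end{smallmatrix}\right)$. The first thing I would observe is that $M^{\ts\prime}$ is genuinely an integer matrix: since $r = \gcd(b,c,d-a)$ divides $b$, the entry $bc/r$ is an integer, and $M^{\ts\prime}$ has the same trace $T = a+d$, the same determinant (a short check: $ad - r\cdot(bc/r) = ad - bc = D$), and the same $\mgcd$, namely $\gcd(bc/r,\, r,\, d-a) = r$ (one inclusion is clear since $r \mid d-a$ and $r \mid bc/r \cdot$ nothing forces more; the reverse uses $r = \gcd(b,c,d-a)$). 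So $M$ and $M^{\ts\prime}$ share $D$, $T$ and hence the same companion matrix $C$, and the same binary quadratic form discriminant.

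The cleanest route is to factor through the companion matrix $C$ of \eqref{mat-defs}. By Proposition~\ref{prop-4}, for every $n\ge 2$ coprime to $r$, both $M$ and $M^{\ts\prime}$ are $\Mat(2,\ZZ_n)^{\times}$-conjugate to $C$, hence to each other; so that case is already done and the content of the present proposition is really the case $\gcd(n,r) > 1$. For the general $n$, I would use \eqref{det-form}: a matrix $A = \left(\begin{smallmatrix}\alpha&\beta\\\gamma&\delta\end{smallmatrix}\right)$ satisfying the semi-conjugacy $MA = AC$ is determined by its first column via $\left(\begin{smallmatrix}\beta\\\delta\end{smallmatrix}\right) = M\left(\begin{smallmatrix}\alpha\\\gamma\end{smallmatrix}\right)$, and then $\det(A) = Q^{}_{M}(\alpha,\gamma) = c\alpha^2 + (d-a)\alpha\gamma - b\gamma^2$. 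The same applies to $M^{\ts\prime}$ with form $Q^{}_{M^{\ts\prime}}(\alpha,\gamma) = r\alpha^2 + (d-a)\alpha\gamma - (bc/r)\gamma^2$. To conjugate $M$ to $M^{\ts\prime}$ mod $n$ I want $A, A'$ with $MA = AC$, $M^{\ts\prime}A' = A'C$, both invertible mod $n$, and then $A'A^{-1}$ does the job; invertibility mod $n$ just requires $Q^{}_M$ and $Q^{}_{M^{\ts\prime}}$ each to represent a value coprime to $n$. Since both forms equal $r$ times a primitive form with the same discriminant, Fact~\ref{prim-rep} (applied to $\frac1r Q^{}_M$ and $\frac1r Q^{}_{M^{\ts\prime}}$) gives primitive representations $\frac1r Q^{}_M(\alpha,\gamma) = k$ and $\frac1r Q^{}_{M^{\ts\prime}}(\alpha',\gamma') = k'$ with $k, k'$ coprime to $n$; the resulting $\det A = rk$, $\det A' = rk'$ need not be coprime to $n$ when $\gcd(n,r)>1$, which is exactly the obstruction that keeps this from being a one-line corollary.

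To get around that, I would split $n$ by CRT: write $n = n_1 n_2$ where $n_1$ collects the prime powers in $n$ coprime to $r$ and $n_2$ the rest. On the $n_1$-part, Proposition~\ref{prop-4} already gives the conjugacy $M \sim C \sim M^{\ts\prime}$ over $\Mat(2,\ZZ_{n_1})^{\times}$. On the $n_2$-part — where every prime divides $r$ — I would instead produce the conjugacy directly and explicitly, guessing $A$ of the form $\left(\begin{smallmatrix}1 & 0\\ * & *\end{smallmatrix}\right)$ or $\left(\begin{smallmatrix}1 & *\\ 0 & 1\end{smallmatrix}\right)$ and solving $MA \equiv AM^{\ts\prime}$; the natural candidate, read off from comparing $M = a\one + rN$ with $M^{\ts\prime} = a\one + rN^{\ts\prime}$ where $N, N^{\ts\prime}$ have the same (mgcd $1$) structure, is a unipotent or diagonal matrix whose determinant is a unit mod $n_2$ precisely because it involves only entries like $b/\!\gcd$ that are coprime to the primes dividing $r$. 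Splicing the two conjugating matrices back together by CRT into a single $A \in \Mat(2,\ZZ_n)^{\times}$ finishes the argument; the final claim about equal local statistics then follows from Fact~\ref{conj-implies-stats}. The main obstacle, then, is handling the primes dividing $r$ — there the quadratic-form machinery degenerates and I expect to need a hands-on conjugating matrix, verifying its determinant is a unit mod $n_2$ by tracking exactly which of $b,c,d-a$ contribute a factor of $r$.
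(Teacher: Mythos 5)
Your proposal has a genuine gap, and it also bypasses a much cleaner route that becomes visible once you write down the decomposition $M = a\,\one + rN$ --- something you almost do at the end but do not exploit. You correctly observe that applying Fact~\ref{prim-rep} to $\tfrac1r Q^{}_M$ only yields $\det A = rk$, which fails to be a unit mod $n$ whenever $\gcd(n,r)>1$, and you propose a CRT split into the $r$-coprime part $n_1$ and the $r$-heavy part $n_2$. The trouble is that on the $n_2$ side you never actually produce a conjugating matrix: you say you would guess a unipotent or diagonal candidate whose determinant is a unit mod $n_2$ ``because it involves only entries like $b/\!\gcd$ that are coprime to the primes dividing $r$.'' That last clause is false in general --- with $b=4$, $c=2$, $d-a=2$ one has $r=2$ but $b/r=2$ is not coprime to $2$; all that is guaranteed is $\gcd(b/r,\,c/r,\,(d-a)/r)=1$, so for a given prime $p\mid r$ only \emph{some} of the three entries is a $p$-unit, and which one varies with $p$. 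A single unipotent or diagonal ansatz does not cover all cases, so the $n_2$ half of your argument is a plan rather than a proof.

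The direct route, which is what the paper takes, is this: with $N = \left(\begin{smallmatrix}0 & b/r \\ c/r & (d-a)/r\end{smallmatrix}\right)$ one has $M = a\,\one + rN$ and $\mgcd(N)=1$ by construction. Proposition~\ref{prop-4} therefore applies to $N$ for \emph{every} $n\ge 2$, since its coprimality hypothesis $\gcd(n,\mgcd(N))=1$ is now automatic, giving $A\in\Mat(2,\ZZ_n)^{\times}$ with $ANA^{-1}\equiv C_N$ mod $n$. Conjugation commutes with the affine shift: $AMA^{-1} = a\,\one + r\,ANA^{-1} \equiv a\,\one + r\,C_N$ mod $n$, and a short computation of $\det N$ and $\trace N$ shows $a\,\one + r\,C_N = \left(\begin{smallmatrix}a & bc/r\\ r & d\end{smallmatrix}\right)$, exactly the target. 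No CRT split, no case analysis on the primes of $r$, and no hand-built $n_2$-conjugator is needed.
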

\begin{proof}
  By construction, the matrix $N$ in the decomposition
  \eqref{decomposition} is still an integer matrix, with $\mgcd (N) =
  \gcd \bigl(\tilde{b},\tilde{c}, (d-a)^{\up}\ts\bigr)=1$.  By
  Proposition~\ref{prop-4}, for all integers $n\ge 2$, $N$ is then
  $\Mat (2,\ZZ_n)^{\times}$-conjugate to its companion matrix
\[
     C^{}_{N} \, = \, \begin{pmatrix}
     0 & \tilde{b}\,\tilde{c} \\ 1 & (d-a)^{\up}
     \end{pmatrix} ,
\]
where both $N$ and $C^{}_{N}$ are taken mod $n$.

The claim now follows from the observation that this conjugacy extends
to the one claimed by the structure of the decomposition $M=a\ts\one +
r\ts N$. Indeed, the relation $N=A C^{}_{N} A^{-1}$ together with
\eqref{decomposition} immediately implies $AMA^{-1} = a\ts\one + r\ts
C^{}_{N}$.
\end{proof}

It is worth recalling that \cite[Lemma~1]{B} does not apply to
$2\!\times\! 2$-matrices, meaning that we do not have global conjugacy in
general.  The benefit of Proposition~\ref{normal-form} is that we
obtain a local conjugacy instead, and to a matrix with a well-defined
element in the lower left corner. For the further arguments, we need a
technical result, formulated within $\Mat (2,\ZZ)$, hence \emph{prior}
to looking at the reductions mod $n$.
\begin{lemma} \label{technical-lemma} 
  Let $M,M^{\ts\prime}\in\Mat(2,\ZZ)$ be the two matrices of
  $\eqref{two-matrices}$ and assume that
  $\det(M)=\det(M^{\ts\prime})=D$ and
  $\trace(M)=\trace(M^{\ts\prime})=T$.  Assume, in addition, that
  $\mgcd(M)=\mgcd(M^{\ts\prime})=r\in\NN$. Then, $r$ divides
  $d-d{\ts}'$.
\end{lemma}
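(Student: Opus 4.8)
The plan is to argue directly with the six integer entries, using only that $r \mid b$, $r \mid c$, $r \mid d-a$ and the analogous divisibilities for $M^{\ts\prime}$, together with the scalar identities $a+d = a{\ts}'+d{\ts}' = T$ and $ad-bc = a{\ts}'d{\ts}'-b{\ts}'c{\ts}' = D$. First I would introduce the integers $e := (d-a)/r$ and $f := (d{\ts}'-a{\ts}')/r$, which are integers because $r = \mgcd(M) = \mgcd(M^{\ts\prime})$ divides $d-a$ and $d{\ts}'-a{\ts}'$. Since $a = T-d$ and $a{\ts}' = T-d{\ts}'$, one has $d-a = 2d-T$ and $d{\ts}'-a{\ts}' = 2d{\ts}'-T$, so that
\[
   r\ts(e-f) \, = \, 2\ts(d-d{\ts}')
   \qquad\text{and}\qquad
   r\ts(e+f) \, = \, -\ts2\ts(T-d-d{\ts}')\ts .
\]

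Next I would bring in the determinant condition as an algebraic identity: substituting $a = T-d$ and $a{\ts}' = T-d{\ts}'$ gives $ad - a{\ts}'d{\ts}' = (d-d{\ts}')(T-d-d{\ts}')$, while $\det(M) = \det(M^{\ts\prime})$ forces $ad - a{\ts}'d{\ts}' = bc - b{\ts}'c{\ts}'$. Since $r$ divides each of $b,c,b{\ts}',c{\ts}'$, the right-hand side is divisible by $r^{2}$, say $bc - b{\ts}'c{\ts}' = r^{2}k$ with $k\in\ZZ$. Multiplying the two relations of the previous step then yields
\[
   r^{2}\ts(e^{2}-f^{2}) \, = \, -\ts4\ts(d-d{\ts}')(T-d-d{\ts}')
   \, = \, -\ts4\ts r^{2}k \ts ,
\]
and cancelling $r^{2}$ (legitimate since $r\neq 0$) gives $e^{2}-f^{2} = -4k$, hence $e^{2}\equiv f^{2}\pmod 4$.

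Finally, since a perfect square is congruent to $0$ or $1$ modulo $4$ according as its base is even or odd, the congruence $e^{2}\equiv f^{2}\pmod 4$ forces $e$ and $f$ to have the same parity; writing $e-f = 2\ell$ and substituting into $r(e-f) = 2(d-d{\ts}')$ gives $d-d{\ts}' = r\ell$, which is precisely the assertion $r\mid d-d{\ts}'$.

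The one delicate point is this last parity step. The crude bookkeeping that uses only $r\mid d-a$ and $r\mid d{\ts}'-a{\ts}'$ yields just $r\mid 2(d-d{\ts}')$, which falls a factor of $2$ short of the claim when $r$ is even; it is the determinant identity, read modulo $4$ through $e^{2}\equiv f^{2}$, that recovers the missing factor. Everything else is routine rearrangement, and no case distinction on the parity of $r$ is needed, since the argument above is uniform.
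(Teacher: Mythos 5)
Your proof is correct, and it reaches the conclusion by a genuinely different route at the decisive step, even though both arguments pivot on the same algebraic identity $(d-d{\ts}')(T-d-d{\ts}')=ad-a{\ts}'d{\ts}'=bc-b{\ts}'c{\ts}'$ (in the paper this is written with $a-d{\ts}'$ in place of $T-d-d{\ts}'$, which is the same thing). The paper sets $m=d-d{\ts}'$, writes $d-a=kr$, and reduces everything to the abstract claim that $r^{2}\mid m(m-kr)$ forces $r\mid m$; this is proved by a prime-by-prime valuation argument, arguing towards a contradiction from $t_{i}<s_{i}$. You instead exploit the trace relation to rewrite $d-a=2d-T$ and $d{\ts}'-a{\ts}'=2d{\ts}'-T$, which lets you package the two linear factors as $r(e-f)=2(d-d{\ts}')$ and $r(e+f)=-2(T-d-d{\ts}')$; multiplying and cancelling $r^{2}$ gives $e^{2}\equiv f^{2}\pmod 4$, and the observation that squares take only the values $0,1$ modulo $4$ forces $e\equiv f\pmod 2$, which supplies exactly the missing factor of $2$ in $r\mid 2(d-d{\ts}')$. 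Your route is arguably more elementary — it needs no appeal to unique factorisation and runs uniformly without the case distinctions ($m=0$, $m=kr$) that the paper disposes of separately — whereas the paper's version isolates a cleaner standalone divisibility lemma ($r^{2}\mid m(m-kr)\Rightarrow r\mid m$) that is independent of the trace relation and could be reused elsewhere. Both proofs are valid; yours trades generality of the inner lemma for a shorter, purely mod-$4$ finish.
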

\begin{proof}
When $d=d{\ts}'$, the statement is trivially true, so we may assume
that $d\neq d{\ts}'$.  Since
$r=\gcd(b,c,d-a)=\gcd(b{\ts}',c{\ts}',d{\ts}'-a{\ts}')$, one has $r^2
\ts | \ts bc$ and $r^2\ts | \ts b{\ts}'c{\ts}'$.  Observe that
\[
   (a-d{\ts}')\ts (d-d{\ts}') \, = \, ad + (d{\ts}'-T)\ts d{\ts}'
   \, = \, ad-a{\ts}'d{\ts}' \, = \, D+bc - D-b{\ts}'c{\ts}'
   \, = \, bc-b{\ts}'c{\ts}' \ts ,
\]
which implies that $r^2\ts | \ts (a-d{\ts}')\ts (d-d{\ts}')$.
With $m:= d-d{\ts}'\neq 0$ and $d-a=k\ts r$, where $k\in\ZZ$
by assumption, we now have to consider
\begin{equation} \label{square-divide}
     r^2 \mid m\ts (m-k \ts r)
\end{equation}
and to show that this implies $r\ts | \ts m$. When $m=kr$, this
is again clear, so assume $m-kr\neq 0$. Then, let $r^2 =
p_{1}^{2s^{}_{1}}\cdot\ldots\cdot p_{\ell}^{2s^{}_{\ell}}$ be the
unique prime decomposition of $r^2$ into powers of distinct
primes. Let $t_i$ be the highest power so that $p_{i}^{t_i}\ts | \ts
m$, which is a non-negative integer.

Assume that $t_i < s_i$ for some index $i$. By \eqref{square-divide},
this implies $p_{i}^{2s_i - t_i}\ts | \ts (m-k\ts r)$ and thus, as
$p_{i}^{s_i}\ts |\ts r$ and $2s_i - t_i > s_i$, also $p_{i}^{s_i}\ts |
m$, in contradiction to the assumption that $t_i < s_i$. Consequently,
we must indeed have $t_i\ge s_i$ for all $1\le i\le \ell$. Since
$r=\prod_{i} p_{i}^{s_i}$, this means that $r\ts | \ts m$ as claimed.
\end{proof}

\begin{prop} \label{local-complete} 
    Let $M,M^{\ts\prime}$ be the matrices from $\eqref{two-matrices}$
  and assume that they have the same trace and determinant.  Assume
  further that $\mgcd (M) = \mgcd (M^{\ts\prime})=r$.  Then, for an
  arbitrary integer $n\ge 2$, the reductions mod $n$ of the matrices
  $M$ and $M^{\ts\prime}$ are $\Mat (2,\ZZ_n)^{\times}$-conjugate.
\end{prop}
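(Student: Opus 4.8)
The plan is to use the local normal form of Proposition~\ref{normal-form} to reduce the statement to a single, completely explicit triangular conjugation that in fact already works over $\ZZ$. First I would dispose of the trivial case $r=0$: by Fact~\ref{exclude} this forces $M=k\one$ and $M^{\ts\prime}=k^{\ts\prime}\one$ with $k,k^{\ts\prime}\in\ZZ$, and equality of traces then gives $k=k^{\ts\prime}$, so that $M=M^{\ts\prime}$ and there is nothing to prove. From now on, assume $r\in\NN$.

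Fix an integer $n\ge 2$. Applying Proposition~\ref{normal-form} separately to $M$ and to $M^{\ts\prime}$ shows that, mod $n$, the matrix $M$ is $\Mat(2,\ZZ_n)^{\times}$-conjugate to $M^{}_{1}:=\left(\begin{smallmatrix} a & bc/r \\ r & d\end{smallmatrix}\right)$, while $M^{\ts\prime}$ is $\Mat(2,\ZZ_n)^{\times}$-conjugate to $M^{\ts\prime}_{1}:=\left(\begin{smallmatrix} a' & b'c'/r \\ r & d'\end{smallmatrix}\right)$; here $bc/r$ and $b'c'/r$ are integers since $r$ divides $b$ and $b'$. Because $\Mat(2,\ZZ_n)^{\times}$-conjugacy is an equivalence relation, it now suffices to conjugate $M^{}_{1}$ into $M^{\ts\prime}_{1}$, and I will do so by an element of $\GL(2,\ZZ)$, which then settles all $n$ at once. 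Observe that $M^{}_{1}$ and $M^{\ts\prime}_{1}$ share the lower-left entry $r$, the common trace $T$ and the common determinant $D$.

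The key ingredient is Lemma~\ref{technical-lemma}: combined with $a+d=a'+d'$ (equal traces) it yields $r\mid a'-a$, so $s:=(a'-a)/r\in\ZZ$. Conjugating $M^{}_{1}$ by the unipotent integer matrix $U:=\left(\begin{smallmatrix} 1 & s \\ 0 & 1\end{smallmatrix}\right)\in\GL(2,\ZZ)$ leaves the lower-left entry equal to $r$ and turns the diagonal $(a,d)$ into $(a+sr,\ts d-sr)=(a',d')$, the second equality again using $a+d=a'+d'$. Since conjugation preserves the determinant, the remaining (upper-right) entry of $U M^{}_{1} U^{-1}$ is forced to equal $(a'd'-D)/r=b'c'/r$; hence $U M^{}_{1} U^{-1}=M^{\ts\prime}_{1}$, and reduction mod $n$ finishes the argument.

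I do not anticipate a genuine obstacle here, as the substantive work has been front-loaded into Proposition~\ref{normal-form} and Lemma~\ref{technical-lemma}. The one point demanding care is the integrality of $s$, which is precisely what Lemma~\ref{technical-lemma} provides; granting that, the triangular conjugation is a one-line computation that does not even require the passage to $\ZZ_n$ (reduction mod $n$ enters only through the invocation of Proposition~\ref{normal-form}).
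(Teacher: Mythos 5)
Your proof is correct and follows essentially the same route as the paper: reduce both matrices to the local normal form of Proposition~\ref{normal-form}, then conjugate the two normal forms into each other by the unipotent matrix $\left(\begin{smallmatrix}1 & s\\ 0 & 1\end{smallmatrix}\right)$ with $s=(a'-a)/r=(d-d')/r$, whose integrality is exactly what Lemma~\ref{technical-lemma} supplies. The only cosmetic differences are that you spell out why $r=0$ forces $M=M'$ (the paper leaves the equal-trace step implicit) and that you deduce the upper-right entry via determinant preservation rather than by direct matrix multiplication; the conjugating element is the same.
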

\begin{proof}
  When $r=0$, Fact~\ref{exclude} implies $M=M^{\ts\prime}$. So, let us
  assume $r\in\NN$.  By Proposition~\ref{normal-form}, we know that
  matrices $M$ and $M^{\ts\prime}$ are $\Mat
  (2,\ZZ_n)^{\times}$-conjugate to integer matrices $N$ and $N'$,
  where
\[
    N \, = \, \begin{pmatrix} a & \frac{b\, c}{r}\\
      r & d \end{pmatrix}  \quad \text{and} \quad
    N{\ts}' \, = \, \begin{pmatrix} a{\ts}' & 
       \frac{b{\ts}' c{\ts}'}{r}\\
      r & d{\ts}' \end{pmatrix} ,
\]
in the sense that their reductions mod $n$ satisfy the corresponding
conjugacies.  Our claim follows if we can show that also $N$ and $N'$
are $\Mat (2,\ZZ_n)^{\times}$-conjugate in this sense.

Consider the unimodular matrix $A=\left(\begin{smallmatrix}1 &
\frac{d-d{\ts}'}{r} \\ 0 & 1 \end{smallmatrix}\right)$, which is an
integer matrix by Lemma~\ref{technical-lemma} and hence an element of
$\GL (2,\ZZ)$. Using $\trace (M)=\trace (M{\ts}')$ and $\det (M)=\det
(M{\ts}')$, it is easy to check that $A N = N' A$ holds, hence
$N'=ANA^{-1}$ within $\GL (2,\ZZ)$. By Fact~\ref{global-to-local}, this
implies the local conjugacy claimed.
\end{proof}

Let us investigate the local conjugacies a bit further,
aiming at a converse of Proposition~\ref{local-complete}.
\begin{prop} \label{local-mod-n} 
  Let $M,M^{\ts\prime}\in\Mat (2,\ZZ)$ be two integer matrices whose
  reductions mod $n$ are $\Mat (2,\ZZ_n)^{\times}$-conjugate for some
  $n\ge 2$. Then, $M$ and $M^{\ts\prime}$ have the same determinants
  and traces mod $n$, and their matrix gcds $r,r'$ generate the same
  ideal in $\ZZ_n$, meaning $r\ts \ZZ_n = r'\ts \ZZ_n$.
\end{prop}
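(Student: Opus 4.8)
The plan is to extract the three invariants one at a time from the hypothesis that $\bar A \bar M \bar A^{-1} = \bar M^{\ts\prime}$ holds in $\Mat(2,\ZZ_n)$ for some $\bar A \in \Mat(2,\ZZ_n)^{\times}$. The determinant and trace are immediate: conjugation in the group $\Mat(2,\ZZ_n)^{\times}$ preserves $\det$ and $\trace$ (the determinant is multiplicative and $\det(\bar A)$ is a unit, and the trace is invariant under conjugation by any invertible element), so $\det(M) \equiv \det(M^{\ts\prime})$ and $\trace(M) \equiv \trace(M^{\ts\prime})$ mod $n$. That handles two of the three claims with essentially no work.

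For the statement about the matrix gcd, the natural route is to reduce mod $n$ the computation already carried out in the proof of Lemma~\ref{mat-gcd}. There, conjugation $M^{\ts\prime} = A M A^{-1}$ by $A = \left(\begin{smallmatrix}\alpha&\beta\\\gamma&\delta\end{smallmatrix}\right)$ over $\ZZ$ was shown to give $\left(\begin{smallmatrix}b'\\c'\\d'-a'\end{smallmatrix}\right) = N(A)\left(\begin{smallmatrix}b\\c\\d-a\end{smallmatrix}\right)$ with $N = N(A) \in \GL(3,\ZZ)$, in fact $\det N = 1$ and $N(A^{-1}) = N(A)^{-1}$. The same polynomial identities in the matrix entries hold over any commutative ring, so over $\ZZ_n$ we get $N(\bar A) \in \Mat(3,\ZZ_n)$ with $\det N(\bar A) = 1$ (hence $N(\bar A) \in \GL(3,\ZZ_n)$, being a unit) and $N(\bar A)^{-1} = N(\bar A^{-1})$, again because these are polynomial identities. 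Applying this invertible $\ZZ_n$-linear map to the vector $(b,c,d-a)^{t}$ mod $n$ and to its inverse shows that the $\ZZ_n$-submodule generated by $\{b,c,d-a\}$ mod $n$ equals the $\ZZ_n$-submodule generated by $\{b',c',d'-a'\}$ mod $n$. But a submodule of $\ZZ_n$ generated by a set $S$ of elements is exactly the ideal $(\gcd(S \cup \{n\}))\ZZ_n = (\gcd(S))\ZZ_n$ where the gcd is taken together with $n$; equivalently, for the integer $r = \mgcd(M) = \gcd(b,c,d-a)$ one has $\sum_i s_i \ZZ_n = \gcd(r,n)\ZZ_n = r\ZZ_n$ inside $\ZZ_n$. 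Hence $r\ZZ_n = r'\ZZ_n$, which is the claim.

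I would organise the write-up so that the $N(\bar A)$ argument is phrased as ``the identity $N(A^{-1}) = N(A)^{-1}$ and $\det N(A) = 1$ from the proof of Lemma~\ref{mat-gcd} are polynomial identities in the entries of $A$, hence remain valid after reduction mod $n$,'' thereby avoiding any recomputation. The one genuine point requiring a sentence of care — and the place where a careless argument could go wrong — is the translation between ``submodule of $\ZZ_n$ generated by several residues'' and ``the ideal $r\ZZ_n$'': one must note that in $\ZZ_n$ the ideal generated by residues $\bar{x}_1,\dots,\bar{x}_k$ is the ideal generated by their integer gcd reduced mod $n$, and that $\gcd(b,c,d-a,n)\ZZ_n = \gcd(b,c,d-a)\ZZ_n = r\ZZ_n$ since adjoining $n$ to a gcd does not change the ideal it generates in $\ZZ_n$. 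Once that is stated, the proof is complete; I expect this bookkeeping, rather than any substantive obstacle, to be the only thing to get exactly right.
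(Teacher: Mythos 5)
Your proposal is correct and follows the same route as the paper: the determinant and trace claims are immediate, and for the $\mgcd$ claim you reduce the $N(A)$ computation from the proof of Lemma~\ref{mat-gcd} mod $n$, noting that the relevant identities are polynomial in the entries of $A$ and hence survive the reduction, to conclude that $r$ and $r'$ generate the same ideal in $\ZZ_n$. The paper phrases the final step as mutual divisibility of $r$ and $r'$ in $\ZZ_n$ rather than equality of generated submodules, but these are the same thing, and your additional care over the translation between ``submodule generated by $\{b,c,d-a\}$'' and ``ideal $r\ZZ_n$'' is sound and makes explicit a point the paper leaves implicit.
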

\begin{proof}
  The statement about the determinants and traces is clear. For the
  claim about the matrix gcd, we can again use the idea of the proof
  of Lemma~\ref{mat-gcd}, up to the point where we conclude that $r\ts
  |\ts r'$ and $r'\ts |\ts r$, now seen as divisibility properties
  within $\ZZ_n$. But $k\ts |\ts\ell$ means that the principal ideal
  $(\ell)=\ell\ZZ_n$ is contained in $(k)$, so that we obtain
  $(r)\subset (r')$ and $(r')\subset (r)$, hence $(r)=(r')$.
\end{proof}

At this stage, one can formulate the following central result.

\begin{theorem}\label{main-theorem}
  {}For two integer matrices $M,M^{\ts\prime}\in\Mat (2,\ZZ)$, the 
  following statements are equivalent:
\begin{itemize}
\item[{\rm (a)}]  The reductions mod $n$ of $M$ and $M^{\ts\prime}$ 
   are $\Mat (2,\ZZ_n)^{\times}$-conjugate for all $n\ge 2$;
\item[{\rm (b)}] $M$ and $M^{\ts\prime}$ satisfy the three relations
   $\det(M)=\det(M^{\ts\prime})$, $\trace(M)=\trace(M^{\ts\prime})$ and
   $\mgcd(M)=\mgcd(M^{\ts\prime})$.
\end{itemize}
\end{theorem}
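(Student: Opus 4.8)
The plan is to prove Theorem~\ref{main-theorem} by establishing both implications, with most of the work already done in the preceding propositions. The direction (b)$\Rightarrow$(a) is essentially immediate: if $M$ and $M^{\ts\prime}$ share determinant, trace and matrix gcd, then Proposition~\ref{local-complete} applies verbatim and gives $\Mat(2,\ZZ_n)^{\times}$-conjugacy of their reductions for every $n\ge 2$. So the real content is the converse, (a)$\Rightarrow$(b), and the subtlety there is that local conjugacy for each individual $n$ only yields congruence conditions (as spelled out in Proposition~\ref{local-mod-n}), and one must promote infinitely many such congruences to honest equalities in $\ZZ$.

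First I would invoke Proposition~\ref{local-mod-n} for every $n\ge 2$. It tells us that $\det(M)\equiv\det(M^{\ts\prime})\pmod n$ and $\trace(M)\equiv\trace(M^{\ts\prime})\pmod n$ for all $n$, which forces $\det(M)=\det(M^{\ts\prime})$ and $\trace(M)=\trace(M^{\ts\prime})$ outright, since two integers congruent modulo all $n$ must be equal. For the matrix gcd, Proposition~\ref{local-mod-n} gives $r\ts\ZZ_n = r'\ts\ZZ_n$ for each $n$, equivalently $\gcd(r,n)=\gcd(r',n)$ for all $n$. Taking $n=r$ yields $r = \gcd(r,r') $ hence $r\ts|\ts r'$, and taking $n=r'$ yields $r'\ts|\ts r$; since both mgcds are non-negative integers, $r=r'$. (One should handle the degenerate possibility $r=0$ or $r'=0$ separately: if say $r=0$ then by Fact~\ref{exclude} $M=a\ts\one$, and local conjugacy mod every $n$ to $M^{\ts\prime}$, which fixes the scalar matrices, forces $M^{\ts\prime}=a\ts\one$ as well, so $r'=0$ too — alternatively one notes $\gcd(r',n)=\gcd(0,n)=n$ for all $n$ forces $r'=0$.) This establishes (b).

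I expect essentially no serious obstacle here, since the structural heavy lifting sits in Proposition~\ref{local-complete} (which rests on the normal-form Proposition~\ref{normal-form}, the primitive-representation Fact~\ref{prim-rep}, and the divisibility Lemma~\ref{technical-lemma}) and in Proposition~\ref{local-mod-n}. The only point requiring a little care is the passage from "congruent/having equal gcd modulo all $n$" to "equal", which is the elementary number-theoretic observation above; and the bookkeeping of the $r=0$ case so that the statement is literally an equivalence with no hidden hypotheses. I would therefore present the proof as: (b)$\Rightarrow$(a) by Proposition~\ref{local-complete}; (a)$\Rightarrow$(b) by applying Proposition~\ref{local-mod-n} for all $n$ and running the two short arithmetic arguments just described, with the scalar-matrix case dispatched via Fact~\ref{exclude}.
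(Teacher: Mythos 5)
Your proof is correct and follows essentially the same route as the paper: the implication (b)$\Rightarrow$(a) comes straight from Proposition~\ref{local-complete}, and (a)$\Rightarrow$(b) is obtained by applying Proposition~\ref{local-mod-n} for all $n\ge 2$ and then promoting the resulting congruences to equalities in $\ZZ$. The only variation is in the final arithmetic step for the matrix gcd: you translate $r\ts\ZZ_n=r'\ts\ZZ_n$ into $\gcd(r,n)=\gcd(r',n)$ and then choose $n=r$ (respectively $n=r'$) to obtain mutual divisibility, whereas the paper singles out a prime $p$ at which the $p$-adic valuations of $r$ and $r'$ differ and takes $n=p^t$; both are correct, and yours is arguably a little cleaner, though you should make sure the chosen $n$ really satisfies $n\ge 2$ (for $r,r'\in\{0,1\}$ the remaining cases are trivial or fall under your separate treatment of $r=0$). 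Your explicit handling of the degenerate case $\mgcd=0$ via Fact~\ref{exclude} (or via $\gcd(0,n)=n$) is a small but genuine improvement on the paper, which leaves that case implicit.
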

\begin{proof}
  The direction (b) $\Longrightarrow$ (a) follows directly from
  Proposition~\ref{local-complete}.  

  For the converse direction, we may conclude from
  Proposition~\ref{local-mod-n} that
  $\det(M)\equiv\det(M^{\ts\prime})$ mod $n$ and
  $\trace(M)\equiv\trace(M^{\ts\prime})$ mod $n$ for all $n\ge 2$.
  Consequently, we must have $\det(M)=\det(M^{\ts\prime})$ and
  $\trace(M)=\trace(M^{\ts\prime})$ (recall that $k\equiv\ell$ mod
  $n\in\NN$ means that $k-\ell$ is divisible by $n$, which 
  simultaneously holds for all $n\in\NN$ only when $k-\ell=0$).

  For the third identity, let $r=\mgcd(M)$ and
  $r^{\ts\prime}=\mgcd(M^{\ts\prime})$ and assume that $r\ts\ZZ_n =
  r'\ts\ZZ_n$ for all $n\ge 2$, but $r\neq r^{\ts\prime}$.
  Consequently, there is a prime $p$ with $r = p^s \varrho$ and
  $r^{\ts\prime} = p^t \varrho^{\ts\prime}$, $t\neq s$, such that
  $\varrho$ and $\varrho^{\ts\prime}$ are both relatively prime with
  $p$. Without loss of generality, we may assume that $t>s$, and then
  choose $n=p^t$.  Clearly, both $\varrho$ and $\varrho^{\ts\prime}$
  are then units in $\ZZ_n$ by construction.  With $(r):=r\ts\ZZ_n$,
  we then obtain $(r)=(p^t \varrho)=(p^t)=(0)$, while
  $(r^{\ts\prime})=(p^s \varrho^{\ts\prime})=(p^s)\neq (0)$, in
  contradiction to the original assumption.
\end{proof}

This theorem permits the following answer to the question
for the local statistics of an integer matrix.

\begin{coro}\label{main-coro}
  The complete local statistics of an integer matrix $M$, in the
  sense of Definition~$\ref{defn-local}$, only depends
  on the three invariants $\det (M)$, $\trace (M)$ and
  $\mgcd (M)$. Two integer matrices with the same triple
  of invariants thus have the same local statistics on all lattices
  $L_n$. In particular, they have the same fixed point counts, both
  locally and globally.  \qed
\end{coro}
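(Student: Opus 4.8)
The plan is to obtain the corollary as a direct combination of Theorem~\ref{main-theorem} and Fact~\ref{conj-implies-stats}. Let $M,M^{\ts\prime}\in\Mat(2,\ZZ)$ share the triple $\bigl(\det(M),\trace(M),\mgcd(M)\bigr)$. Then condition (b) of Theorem~\ref{main-theorem} holds, so by the implication (b)$\Rightarrow$(a) the reductions of $M$ and $M^{\ts\prime}$ mod $n$ are $\Mat(2,\ZZ_n)^{\times}$-conjugate for every $n\ge 2$; the case $n=1$ is void, since $L_1=\{0\}$ is fixed by every matrix. By Fact~\ref{conj-implies-stats}, each such local conjugacy yields a graph isomorphism of the associated directed pseudo-graphs on $L_n$ in the sense of Definition~\ref{defn-local}. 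Hence $M$ and $M^{\ts\prime}$ have the same local statistics on $L_n$ for all $n\in\NN$, which is the main assertion.

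It then remains to derive the statement about fixed point counts. For the local counts I would note that a graph isomorphism of the pseudo-graphs on $L_n$ sends cycles to cycles of the same length and matches up corresponding pretails, so it preserves every orbit count $c^{(n)}_{m}$ and hence every $a^{(n)}_{m}=\sum_{d\ts|\ts m} d\,c^{(n)}_{d}$ via \eqref{a-from-c}; thus all local counts of $M$ and $M^{\ts\prime}$ agree. For the global counts I would invoke Fact~\ref{count-formula}: $a_m=\lvert\det(M^m-\one)\rvert$ for every $m$, and by Lemma~\ref{iterates} (or simply because $\det(M^m-\one)$ is a symmetric function of the two roots of $x^2-\trace(M)\,x+\det(M)$) this depends only on the determinant and trace; equality of those two invariants therefore forces $a_m=a^{\ts\prime}_{m}$ for all $m$, and then $c_m=c^{\ts\prime}_{m}$ by \eqref{c-from-a}. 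In the hyperbolic case one can instead argue that every fixed point of $M^m$ is rational and isolated by Fact~\ref{fact:rational}, so that $a_m=\sup_n a^{(n)}_{m}$ and equality of all local counts already gives equality of the global ones.

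The whole argument is essentially bookkeeping, and I expect no genuine obstacle. The one place calling for a moment's attention is the transition from ``same local statistics'' to ``same local counts'' — but this is immediate from the definition of the graph isomorphism — and, for the global statement, the observation that $a_m$ counts only the \emph{isolated} fixed points, so that even when subtori of fixed points occur the value $\lvert\det(M^m-\one)\rvert$ is still correct and still a function of $\det(M)$ and $\trace(M)$ alone.
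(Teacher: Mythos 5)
Your proposal is correct and matches the argument the paper has in mind; the corollary is stated with a terminal $\square$ precisely because it is this direct combination of Theorem~\ref{main-theorem}(b)$\Rightarrow$(a) with Fact~\ref{conj-implies-stats}, and your treatment of the fixed-point counts (local counts from the graph isomorphism, global counts from Fact~\ref{count-formula} together with the observation that $\lvert\det(M^m-\one)\rvert$ is a function of $\det(M)$ and $\trace(M)$ alone) supplies exactly the bookkeeping the authors leave implicit.
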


The result of Theorem~\ref{main-theorem} permits an interpretation in
terms of $\widehat{\ZZ}$, which is the inverse (or projective) limit
of the rings $\ZZ_n$ over the positive integers ordered by
divisibility, written as $\widehat{\ZZ} = \varprojlim \ZZ_n$. It is
also known as the Pr\"{u}fer ring, see \cite{Neu} for details.

\begin{coro}
  In the situation of Theorem~$\ref{main-theorem}$, any of the two
  conditions is equivalent to the matrices $M$ and $M^{\prime}$ being
  $\GL (2,\widehat{\ZZ})$-conjugate.
\end{coro}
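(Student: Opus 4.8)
The plan is to start from the identification $\GL (2,\widehat{\ZZ}) = \varprojlim \GL (2,\ZZ_n)$, the limit being taken over $\NN$ ordered by divisibility with the canonical reduction maps as transition morphisms. This identification is immediate: $\widehat{\ZZ} = \varprojlim \ZZ_n$ is a profinite ring, $\Mat (2,-)$ visibly commutes with projective limits of rings, and an element of $\Mat (2,\widehat{\ZZ})$ is invertible exactly when all of its reductions are, with the inverses automatically forming a compatible family since the transition maps are ring homomorphisms. Under this identification, a $\GL (2,\widehat{\ZZ})$-conjugacy between $M$ and $M^{\prime}$ is the same thing as a \emph{compatible} family $(A_n)_{n\in\NN}$ with $A_n\in\Mat (2,\ZZ_n)^{\times}$, satisfying $A_n M\equiv M^{\prime} A_n$ mod $n$ and reducing correctly along divisibility.

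The easy direction is that a $\GL (2,\widehat{\ZZ})$-conjugacy, projected modulo each $n$, yields precisely condition~(a) of Theorem~\ref{main-theorem}; combined with the equivalence (a)$\Leftrightarrow$(b) already established there, this also gives~(b). For the converse I would argue from~(a) as follows. For each $n\ge 1$, let $X_n$ denote the set of $A\in\Mat (2,\ZZ_n)^{\times}$ with $A\ts\overline{M} = \overline{M^{\prime}}\ts A$ in $\Mat (2,\ZZ_n)$. By~(a) this set is non-empty (and trivially so for $n=1$), and it is finite since $\Mat (2,\ZZ_n)^{\times}$ is. Reduction mod $n$ sends $X_{n'}$ into $X_n$ whenever $n\ts|\ts n'$, because reducing a conjugacy modulo $n'$ yields one modulo $n$; thus $(X_n)_{n\in\NN}$ is an inverse system of non-empty finite sets over the directed index set $(\NN,\ts|\ts)$. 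Any point of $\varprojlim X_n$ is then a compatible family of conjugators, i.e.\ an element of $\GL (2,\widehat{\ZZ})$ conjugating $M$ into $M^{\prime}$.

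The one point that requires care — and, I expect, the only non-formal step — is the non-emptiness of $\varprojlim X_n$. I would obtain it from the standard fact that a projective limit of non-empty finite sets over a directed poset is non-empty: realise $\varprojlim X_n$ as a closed subset of the compact product $\prod_n X_n$ (each $X_n$ finite, hence compact discrete, so the product is compact by Tychonoff), and verify the finite intersection property by noting that any finite collection of the defining compatibility conditions involves only finitely many indices, all dividing some common multiple $N$, so the image in $\prod_n X_n$ of an arbitrary element of $X_N$ already satisfies them. The reason a direct construction fails here is that the conjugators built in the proof of Proposition~\ref{local-complete} depend on $n$ through the choice of a value of the form $Q^{}_{M}$ coprime to $n$, and are not \emph{a priori} mutually compatible; the compactness argument is exactly what bridges this gap. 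An essentially equivalent alternative is to use $\widehat{\ZZ}\cong\prod_p \ZZ_p$ and run the same compactness argument over the powers of each prime $p$ separately. Finally, since (a)$\Leftrightarrow$(b) by Theorem~\ref{main-theorem}, the equivalence with $\GL (2,\widehat{\ZZ})$-conjugacy holds for both conditions.
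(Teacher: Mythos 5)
Your proposal is correct and follows essentially the same route as the paper: both directions are handled by forming the inverse system of non-empty finite sets $X_n$ of mod-$n$ conjugators and invoking the non-emptiness of $\varprojlim X_n$. You merely spell out the compactness/finite-intersection justification that the paper leaves implicit, which is a reasonable elaboration rather than a different argument.
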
  
\begin{proof}
 Clearly, a $\GL (2,\widehat{\ZZ})$-conjugacy implies condition (a) of 
 Theorem~\ref{main-theorem}. Conversely, assuming (a), consider the
 inverse system of $\Mat (2,\ZZ_n)^{\times}$-subsets $X(n)$ defined by
\[
     X(n) = \{ A \in \Mat (2,\ZZ_n)^{\times} \mid 
        A M = M^{\ts\prime} \! A \}
\] 
with $n\in\NN$ and ordered inductively by divisibility. All $X(n)$ are
non-empty by assumption and finite. Let $X=\varprojlim X(n)$ be the
inverse limit, which is then non-empty as well. Any element in $X$
achieves the required conjugacy via the corresponding projections.
\end{proof}

We conclude by noting:

\begin{remark} \label{always-rev}
  One consequence of Theorem~\ref{main-theorem} for $\SL (2,\ZZ)$
  matrices, noting \eqref{gcd-on-inverse}, is that such a matrix is
  conjugate to its inverse on \emph{all} lattices individually,
  despite the fact that this is generally untrue on the torus, as in
  Example~\ref{not-conjugate}. This relates to the investigations of
  quantum cat maps and their perturbations in \cite{KM}.  \exend
\end{remark}

\begin{remark} \label{spectrumconj} 
  We have described the three invariants for complete $\Mat
  (2,\ZZ_n)^{\times}$-conjugacy, whereas \cite{ATW} presented the
  analogous result for $\GL (2,\ZZ)$-conjugacy.  The conjugacy of $M$
  and $M^{\ts\prime}$ in the special linear group over the $p$-adic
  integers is a related question, addressed in
  \cite{AO2}, though Example~\ref{sl-versus-gl} above indicates
  the difference to $\GL (2,\ZZ_n)$-conjugacy.  \exend
\end{remark}

\smallskip
\section*{Acknowledgements}
It is a pleasure to thank Grant Cairns, Natascha Neum\"{a}rker, Franco
Vivaldi, Benjy Weiss and Peter Zeiner for discussions and useful hints
on the manuscript.  This work was supported by the German Research
Council (DFG), within the CRC 701, and under the Australian Research
Council's Discovery funding scheme (project number DP$\,$0774473),
including mutual visits.

\smallskip

\end{document}